\documentclass[11pt]{article}   
\usepackage{amssymb,amscd,latexsym}   
\usepackage{amsmath}
\usepackage{amsthm}
\usepackage{mathdots}
\usepackage[pagebackref]{hyperref}
\usepackage{color}
\usepackage[all]{xy}
\usepackage{imakeidx}         
\usepackage{graphicx}        
\usepackage{graphicx,color} 
\usepackage{tikz}

\usepackage{graphicx,color} 
\usepackage{tikz}
\newcommand{\rar}{\rightarrow}
\newcommand{\lar}{\longrightarrow}
\newcommand{\llar}{-\kern-5pt-\kern-5pt\longrightarrow}

\newtheorem{Theorem}{Theorem}[section]
\newtheorem{Lemma}[Theorem]{Lemma}

\newtheorem{Proposition}[Theorem]{Proposition}
\newtheorem{Remark}[Theorem]{Remark}
\newtheorem{Example}[Theorem]{Example}

\newtheorem{Conjecture}[Theorem]{Conjecture}
\newtheorem{Definition}[Theorem]{Definition}

\newtheorem{Problem}[Theorem]{Problem}
\newtheorem{Setup}[Theorem]{Setup}
\def\sqr#1#2{{\vcenter{\hrule height.#2pt
			\hbox{\vrule width.#2pt height#1pt \kern#1pt
				\vrule width.#2pt}
			\hrule height.#2pt}}}
\def\phi{\varphi}

\DeclareMathOperator{\coker}{Coker}

\DeclareMathOperator{\rank}{rank}
\DeclareMathOperator{\Ht}{ht}

\def\xx{{\bf x}}

\def\ff{{\bf f}}

\def\ff{{\bf f}}

\def\fm{{\mathfrak m}}

\def\Ht{{\rm ht}\,}

\def\codim{{\rm codim}\,}

\def\ker{{\rm ker}\,}

\def\syz{\mbox{\rm Syz}}

\def\restr{{\kern-1pt\restriction\kern-1pt}}

\def\pp{{\mathbb P}}

\begin{document}
	\begin{center}
		{\large}{\bf\sc The structure of almost Cohen-Macaulay $3$-generated ideals of
			codimension $2$ in terms of matrix theory}
		\footnotetext{AMS Mathematics
			Subject Classification (2010   Revision). Primary 13A02, 13A30, 13D02, 13H10, 13H15; Secondary  14E05, 14M07, 14M10,  14M12.} 
		\footnotetext{	{\em Key Words and Phrases}: linear matrices, determinantal polynomials, Hessian, Hankel matrices, catalecticants, plane reduced points,  perfect ideal of codimension two, Rees algebra, associated graded ring.}

		\bigskip
		
		{\sc Ricardo Burity}\footnote{Partially supported by a CNPq grant (408698/2023-3)}\quad{\sc Thiago Fiel}\footnote{Under a CNPq grant (171302/2023-0)}\quad
		{\sc Zaqueu Ramos}\footnote{Partially supported by a CNPq grant (304122/2022-0)}  \quad
		{\large\sc Aron  Simis}\footnote{Partially
			supported by a CNPq grant (301131/2019-8).}

	\end{center}
	
	
	
	\begin{abstract}
	Let $R$ be a standard graded polynomial ring over a field $k$. 
	The paper focuses on homogeneous ideals $J \subset R$ of codimension $2$ generated by three forms of the same degree $d \geq 2$ that are almost Cohen--Macaulay, i.e., of homological dimension $2$.  Based on the structure of the minimal graded free resolution of $J$ and  numerical data encoded in certain \emph{latent data}, one introduces the notion of  \emph{level matrices} associated with these data. The main result provides a complete characterization of an almost Cohen--Macaulay $3$-generated ideal $J$ of codimension $2$ in terms of the existence of a related level matrix for which $J$ arises as the ideal of its maximal minors that fix a submatrix. One provides algebraic and geometric examples illustrating the results.	
	\end{abstract}
	
	\section{Introduction}

	Given a standard graded polynomial ring $R$ over a field $k$, the main finding of this work is the characterization of an almost Cohen--Macaulay $3$-generated ideal $J\subset R$ of codimension $2$ as the ideal of maximal minors of a suitable matrix fixing a submatrix.
	
	\subsection{Nature of the problem}


	The basic aim of the paper is at the class of homogeneous ideals $J\subset R$ of codimension two generated by three forms. 
While the main bulk of the classical literature has focused on codimension two ideals which happen to be perfect, not even the class of the $3$-generated such ideals seems to be fully discussed (see, e.g., \cite[Section 2]{PPPideals}).
At another end, arbitrary $3$-generated ideals have often been contemplated as a relevant class (see, e.g., \cite{Bruns},  \cite{Gulliksen}, \cite{Kohn}, \cite{McCu}).
	
	Now, from a homological point of view, there has been interest in collecting detailed information on such ideals $J$ of large homological dimension (see, e.g., \cite{Burch}, \cite{Kohn}, \cite{McCu2011}).
	Here, one focuses on the next best environment,  leading to the case of a three-generated codimension two non-perfect ideal  -- so to say, the ``first'' non Cohen-Macaulay case.
	Precisely, one gets a grip on almost Cohen--Macaulay $3$-generated ideals of codimension $2$, hence of  homological dimension $2$.
	Despite apparently a  tiny class, these ideals have been pursued quite thoroughly in past and recent literature, in a variety of styles, often in the case where they are Jacobian ideals of forms (\cite{Miro-et-al}, \cite{Ellia1998}, \cite{RTY}, \cite{HS}, \cite{JNS}, \cite{Ma_et_al2021}). The present work  hopefully sheds a new light on various aspects of this landscape. 
	Possibly, a watershed between this work and some of the earliest ones is that it pursuits ideal theory outcome off a thorough examination of pivotal  matrices related to the chain maps in the relevant free resolutions.
	
	More generally, in the case of any homogeneous ideal $I$ in a standard graded polynomial ring $R=k[x_1,\ldots,x_n]$, which is equigenerated in degree $d\geq 1$, an interesting problem that has been along for quite a while back is the search of upper bounds for the degrees of minimal generators of the syzygies of $I$.
	In this regard, already in \cite[Definition 2.1]{SimisOrdinary} the notion of {\em non-degeneracy} was introduced, to mean that the degrees of minimal syzygy generators of $I$ are bounded above by $d$.
	The encouragement for introducing this notion came from a result in \cite[Proposition 6]{Lazard} to the effect that, for $d\leq 2$, every such ideal is non-degenerate.
	In  \cite[Definition 2.1]{SimisOrdinary}  a characterization of a nodal cubic in $\pp^2$ was given in terms of non-degeneracy of its gradient ideal $I\subset k[x,y,z]$.
	An easy outcome of the focus here extends this result to higher nodality.
	
A strongly homological minded approach was taken up in \cite{HS} in the case of a three-dimensional ground polynomial ring. On its own right, some of the basic results obtained thereof were subsequently applied in some of the many findings of Dimca and Sticlaru (e.g., \cite{Dimca-Sticlaru2019}, \cite{Dimca_Sticlaru-Geom.Ded}, \cite{Dimca-SticlaruJacobianSameDeg}, \cite{Dimca-SticlaruJacobianDeriv}). 	
	One of the side results emerging essentially from \cite{HS} above is a criterium of non-perfectness for an equigenerated ideal in dimension $3$ generated by three forms having codimension $2$, in terms of the shifts in the first syzygy matrix of the ideal.
	The simplest proof of this criterium is possibly found in \cite[Lemma 1.1]{SimisTohaneanu}. 
	
	Roughly stated, the present work extends both
	 approaches of \cite{HS} and \cite{Dimca_Sticlaru-Geom.Ded},  the first of which was restricted to ideals in $k[x,y,z]$, the second to the Jacobian ideals of plane curves.
	 The watershed, if any, between these two landscapes as yet escapes a full understanding.
	 Examples abound both in the setup of plane curves as in  the case of non-geometric territory. The present results show that both landscapes  can be understood in terms of certain matrices,  whose role is explained through the notion of maximal minors of a matrix that fix a convenient submatrix (see, e.g., \cite{AnSi1981}, \cite{AnSi1986}).
	 
	 \subsection{The main theorem}
	 
	 In order to state the main finding of the paper, introduce the following:
	 
	 \begin{Setup}\label{all_setup}{\rm The main ingredients throughout are$:$
	 	\begin{itemize}
	 		\item [{\bf (L)}]
	 		A set of integers $d\geq 1,$ $m\geq 3,$ $1\leq\delta_1\leq \cdots\leq\delta_m $ and $\epsilon_1,\ldots,\epsilon_{m-2}\geq 1$ satisfying the following conditions: (i) $	\delta_{3}+\epsilon_{1}\leq \cdots\leq \delta_m+\epsilon_{m-2}$, (ii) $\delta_1+\delta_2=d+\sum_{j=1}^{m-2}\epsilon_j$,\, (iii)
	 		$\delta_i+\delta_j\geq {d+1} \,\,\mbox{for every}\,\,1\leq i<j\leq m$,\, (iv)
	 		$\delta_3\leq d.$
	
	 		Such an integer set will be encoded in the notation $(d,m,\underline{\delta}, \underline{\epsilon})$ and  refer to as {\em latent data}.
	 		
	 		
	 		\item [{\bf (M)}] Given latent data $(d,m,\underline{\delta}, \underline{\epsilon})$, an $(m+1)\times m$ vertical block matrix $\eta:=\left[\begin{array}{c} 
	 			A \\ 
	 			\hline 
	 			B \end{array}\right],$ such that:
	 		\begin{enumerate}
	 			\item[{\rm (a)}] $A:=(a_{i,j})$ is a $3\times m$ matrix with entries in $R$ such that $a_{i,j}$ is a homogeneous polynomial of degree $d-\delta_j$ if  $d-\delta_j\geq 0,$ and   $a_{i,j}=0$ if $d-\delta_j< 0$.
	 			\item[{\rm (b)}]  $B:=(b_{i,j})$ is an $(m-2)\times m$ matrix with entries in $R$ such that $b_{i,j}$ is a homogeneous polynomial of degree $\delta_{i+2}-\delta_j+\epsilon_i$ if $\delta_{i+2}-\delta_j+\epsilon_i>0$ and $b_{i,j}=0$ if $\delta_{i+2}-\delta_j+\epsilon_i\leq 0.$ 
	 			\item[{\rm (c)}] ${\rm ht\,}I_m(\eta)=2$, and ${\rm ht\,}I_{m-2}( B)=3,$ where ${\rm ht\,}$ denotes \emph{height} (codimension).
	 		\end{enumerate}
	 	\end{itemize}
	 }
	 \end{Setup}

	 \begin{Definition}\label{def_level-first}\rm
	 	A matrix such as $\eta$ above
	 	will be said to be 
	 	{\em $(d,m,\underline{\delta}, \underline{\epsilon})$-level}.
	 \end{Definition}
	 
	 The main result of the paper is the following theorem.
	 
	 \begin{Theorem}\label{mainthm}  Let $J\subset R$ be an ideal.  The following conditions are equivalent$:$
	 	\begin{itemize}
	 		\item[{\rm (i)}]  $J$ is an almost Cohen--Macaulay codimension $2$ ideal generated by three forms of the same degree $d\geq 2$.
	 		\item[{\rm (ii)}] There exist  latent data $(d,m, \underline{\delta},\underline{\epsilon})$, and a $(d,m, \underline{\delta},\underline{\epsilon})$-level  matrix $\eta$ such that $J$ is generated by the maximal minors of $\eta$ fixing its lower block consisting of $m-2$ rows.
	 	\end{itemize}
	 \end{Theorem}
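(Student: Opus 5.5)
The plan is to go through the minimal graded free resolution of $R/J$ in both directions, using the Buchsbaum--Eisenbud acyclicity criterion and their first structure theorem for multiplicative structure. The latent data will be read off from the graded shifts.

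\emph{(i)$\Rightarrow$(ii).} Let $J=(f_1,f_2,f_3)$ with $\deg f_i=d$. Almost Cohen--Macaulay of codimension $2$ means ${\rm pd}\,R/J=3$, so the minimal resolution has the form
$$0\to F_3=\bigoplus_{i=1}^{m-2}R(-d-\delta_{i+2}-\epsilon_i)\stackrel{\psi}{\lar} F_2=\bigoplus_{j=1}^{m}R(-d-\delta_j)\stackrel{\phi}{\lar} R(-d)^3\stackrel{f}{\lar} R .$$
The rank count $3-m+(m-2)=1$ fixes ${\rm rank}\,F_3=m-2$, and $F_3\neq 0$ forces $m\geq 3$. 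Order the $\delta_j$ increasingly and label $F_3$ so that (i) of {\bf (L)} holds. I would then verify the remaining conditions of {\bf (L)} as follows.
\begin{itemize}
\item Condition (iii) is minimality of syzygies of equigenerated forms: a syzygy of degree $\delta$ involving only two generators would give a common factor. This rules out $\delta_i+\delta_j\leq d$, using ${\rm ht}\,J=2$ and the fact that $\phi$ has rank $2$.
\item Condition (ii) comes from the degree of the ideal $I_{m-2}(\psi)$ together with the Buchsbaum--Eisenbud factorization $\wedge^2\phi=a_2\circ(\wedge^{m-2}\psi)^*$ and $f=a_1\,a_2^*$. Here ${\rm ht}\,J=2$ forces $a_1$ to be a unit, so $a_2$ is, up to a unit, the vector $f$. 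Comparing degrees then gives $\delta_1+\delta_2=d+\sum\epsilon_j$.
\item Condition (iv), $\delta_3\leq d$, should follow from the fact that the columns $3,\dots,m$ must be hit nontrivially by $\psi$, combined with minimality.
\end{itemize}
Set $B:=\psi^T$. Its entries have exactly the degrees required in {\bf (M)}(b). The acyclicity criterion gives ${\rm ht}\,I_{m-2}(B)\geq 3$. Equality holds because ${\rm Ext}^3(R/J,R)\neq0$ is supported on $V(I_{m-2}(\psi))$.

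The main obstacle is producing the block $A$. I would aim for a $3\times m$ matrix $A$ whose rows $u_1,u_2,u_3$, with $\deg u_{i,j}=d-\delta_j$, satisfy
$$\det\begin{bmatrix}u_k\\ u_l\\ B\end{bmatrix}=\pm f_n\qquad \{k,l,n\}=\{1,2,3\}.$$
By Laplace expansion along the first two rows, this determinant equals the pairing $(u_k\wedge u_l)(w)$, where $w\in\wedge^2F_2$ is the vector of signed $(m-2)$-minors of $B$. Since ${\rm grade}\,I_{m-2}(B)\geq 2$, the kernel of $B$ is the reflexive rank-$2$ module dual to ${\rm coker}\,\psi$, and $w$ generates $\wedge^2$ of that kernel. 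So the task reduces to solving $(u_k\wedge u_l)(w)=f_n$. My first attempt would be to use $a_2=f$ from the structure theorem: write $f$ through $\wedge^2\phi$ and dualize, lifting the cocycle ${\rm Hom}(F_\bullet,R)$ in degree $d$ to obtain the $u_i$. If that fails, I would fall back on a direct argument: $J\cong$ the image of $\wedge^2({\rm Syz}\,J)$ in $\wedge^2 R^3\cong R^3$ (since ${\rm ht}\,J\geq2$), and I would pull this back through $F_2$. Once $A$ is found, ${\rm ht}\,I_m(\eta)=2$ holds automatically, because $I_m(\eta)\supseteq J$ and $I_m(\eta)$ is contained in an ideal of the same codimension.

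\emph{(ii)$\Rightarrow$(i).} Given a level matrix $\eta$, let $f_n$ be the three maximal minors fixing $B$. The degree count from row shifts minus column shifts is $2d+\sum_i(\delta_{i+2}+\epsilon_i)-\sum_j\delta_j$, which equals $d$ by {\bf (L)}(ii), so the $f_n$ are forms of degree $d$. I would then build the complex
$$0\to R^{m-2}\stackrel{B^T}{\lar}R^m\stackrel{\phi}{\lar}R^3\stackrel{f}{\lar}R,$$
where $\phi$ has columns given by contracting $A$ against the $(m-1)$-minors of the matrix $\binom{a}{B}$ with one row $a$ of $A$ omitted. Exactness follows from generalized Cramer rules, i.e.\ the Eagon--Northcott/Buchsbaum--Rim type identities for minors fixing a submatrix as in \cite{AnSi1981}. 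Acyclicity then follows from Buchsbaum--Eisenbud, using ${\rm ht}\,I_{m-2}(B)=3$ and ${\rm ht}\,J={\rm ht}\,I_m(\eta)=2$. The conditions (iii) and (iv) in {\bf (L)} guarantee minimality, i.e.\ no unit entries. Together with ${\rm ht}\,I_{m-2}(B)=3$, which makes $F_3\neq0$ survive, this gives ${\rm pd}\,R/J=3$ exactly, so $J$ is almost Cohen--Macaulay.
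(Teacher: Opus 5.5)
There is a genuine gap at the heart of (i)$\Rightarrow$(ii): you never produce the block $A$, and you never prove that the three maximal minors of $\eta$ fixing $B$ generate $J$. You reduce the problem to solving $(u_k\wedge u_l)(w)=\pm f_n$, but then offer only a ``first attempt'' and a ``fallback''. The fallback rests on a false claim. By the Buchsbaum--Eisenbud structure theorem, the image of $\wedge^2{\rm Syz}(J)$ in $\wedge^2R^3\cong R^3$ is $I_{m-2}(\psi)\cdot(f_1,f_2,f_3)^t$. That module is isomorphic to the height-$3$ ideal $I_{m-2}(\psi)$, not to $J$.

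The missing idea is the Buchsbaum--Rim complex of $B=\psi^t$:
\begin{itemize}
\item Since ${\rm ht}\,I_{m-2}(B)=3$, this complex is exact, so $\ker B$ is the image of the skew-symmetric matrix $K$ of \eqref{KassociatedtoB}.
\item Then $B\phi^t=0$ forces a factorization $\phi=AK$. A degree count shows that the $j$th column of $A$ is homogeneous of degree $d-\delta_j$, and zero if $d<\delta_j$.
\item Because $K$ has rank $2$ and nonzero off-diagonal entries, $C_2(K)=ww^t$ has rank one (Lemma~\ref{compoundskwerank2}). Hence the rows of $C_2(\phi)=C_2(A)C_2(K)$ are $p_nw^t$, that is, $I_2(\phi)=\langle p_1,p_2,p_3\rangle I_{m-2}(B)$.
\item Also $[p_1\,p_2\,p_3]\phi=-[p_4\cdots p_{m+1}]BK=0$.
\end{itemize}
From here, either your identity $\wedge^2\phi=a_2a_3^*$ with $a_2=\pm f$, or the paper's proportionality-plus-degree argument, gives $\langle p_1,p_2,p_3\rangle=J$. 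Then ${\rm ht}\,I_m(\eta)=2$ follows as in Proposition~\ref{pre-level}(c). Without $\phi=AK$, your sketch has no mechanism that produces $A$ at all.

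There are further gaps.

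\textbf{Condition (iv).} You give no actual argument for $\delta_3\le d$; the paper needs Theorem~\ref{D-Sextended} for it. A short proof does exist:
\begin{itemize}
\item If $\delta_3>d$, the Koszul syzygies lie in the free module $R\mathfrak z_1\oplus R\mathfrak z_2$.
\item So their $2\times3$ coefficient matrix $C$ has the same kernel as the Koszul map $\wedge^2R^3\to R^3$. Because ${\rm grade}\,J=2$, that kernel is generated by a vector with entries $\pm f_i$.
\item The vector of signed $2$-minors of $C$ is nonzero, since $C$ has rank $2$, and has degree $2d-\delta_1-\delta_2$. It must be a multiple of that generator, forcing $\delta_1+\delta_2\le d$, which contradicts (iii).
\end{itemize}

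\textbf{Condition (iii).} Your sketch never uses that $J$ is not perfect. That hypothesis is exactly what excludes $\delta_i+\delta_j=d$, via Hilbert--Burch.

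\textbf{Height of $J$ in (ii)$\Rightarrow$(i).} You assert ${\rm ht}\,J={\rm ht}\,I_m(\eta)=2$, but $J\subseteq I_m(\eta)$ only gives ${\rm ht}\,J\le 2$. The missing step is the containment $I_{m-2}(B)\,I_m(\eta)\subseteq J$. You get it by multiplying $[p_1\,p_2\,p_3]A=-[p_4\cdots p_{m+1}]B$ by adjugates of the maximal square submatrices of $B$. Together with ${\rm ht}\,I_{m-2}(B)=3$, this gives ${\rm ht}\,J\ge2$.

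\textbf{Acyclicity.} The Buchsbaum--Eisenbud criterion also needs ${\rm ht}\,I_2(\phi)\ge 2$. This again comes from $I_2(AK)=J\,I_{m-2}(B)$, or directly from \cite[Theorem D]{AnSi1986}.

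Finally, minimality of that complex follows from its entries having positive degree, not from (iii)--(iv).
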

	 
	 As it happens, none of the two implications is obvious.
	 
	 One now briefly discusses the main results of the paper.
	 
	 Risking repetition, throughout $R=k[x_1,\ldots,x_n]$ denotes a standard graded polynomial ring over a field $k$, and $d\geq 1$ is an integer.
	 As explained above, the basic piece is an  ideal $J\subset R$  generated by three forms of degree $d$, assumed to be of codimension $2$ and homological dimension $2$.
	 
	 One cornerstone  is the notion of  latent data as introduced above, by establishing that they naturally emerge out of the shape of the generating syzygies of the minimal free resolution of $J$.
	 Some of these are granted by findings of \cite{HS} in the plane (i.e., over $k[x,y,z]$) easily converted to $R$ as in here (cf. \eqref{bigsumHS} and \eqref{sumHS}). The remaining property of these shifts is obtained by extending to  arbitrary forms a result of Dimca--Sticlaru on the partial derivatives of a plane curve (Theorem~\ref{D-Sextended}).
	 
	 A second cornerstone is the  notion of a level matrix in $2$-block format, based on a set of latent data, as defined in detail above.
	 
	 Bound together, these two ideas frame the standing findings of the paper.
	 Sided with a version of the classical notion of ideals of minors of a matrix fixing a submatrix, they furnish the main propositions in this work, namely,  Proposition~\ref{AS-implies-ideal-prescribed-shifts} , Proposition~\ref{pre-level}, and Theorem~\ref{mainthm}.
	 The first of these results explains how a level matrix $\eta$ based on latent data, with the concurrence of a certain skew-symmetric matrix $K$ of rank $2$, produces the minimal free resolution of the three maximal minors of $\eta$ fixing its lower block.
	 
	 The role of  $K$ in  the context is in row with an earlier idea of Vasconcelos, and together they are better recovered in terms of compound matrices, an idea one explores to some extent  in order to pull out the elements of Proposition~\ref{pre-level}.
	 
	Altogether, these two propositions lead to the statemente and proof of Theorem~\ref{mainthm}.
	 
	 The last part of the paper consists of selected examples, both geometric and non-geometric, illustrating the role of the main results.
	 Some of these examples have been mentioned before in the literature, possibly in a different context.

	\section{On the shifts of the minimal free resolution}
	
	\subsection{A syzygy uperbound of Dimca--Sticlaru in the case of forms}\label{D-S}
	
	Let $R=k[x_1,\ldots,x_n]$ denote a standard graded polynomial ring in $n\geq3$ variables over a field $k$. If $J$ stands for a homogeneous ideal of $R$ generated by three forms of degree $d$, of homological dimension $\rho\geq 2$, then, its minimal graded free resolution has the form 
	
	\begin{equation}\label{res-three-generated}
		0\to F_{\rho}\to \cdots \to F_2 \to \bigoplus_{i=1}^{m} R(-d-\delta_i)\stackrel{\phi}\to R(-d)^3\to R,
	\end{equation}
	for suitable $m\geq 3$ and shifts $\delta_1\leq \delta_2\leq \delta_3\leq\cdots\leq \delta_m,$ where $F_2,\ldots,F_{\rho}$ are suitable graded free modules.

	
	A preliminary result in the search of bounds of the shifts $\delta_i$ above is the following:
	\begin{Proposition}\label{cod2_3gens}
		Let $R=k[x_1,\ldots,x_n]$ be a standard graded polynomial ring in $n\geq 3$ variables over a field $k$.  If $J$ is a homogeneous ideal of $R$ of height $2$ generated by three forms of degree $d$, then the following conditions are equivalent$:$
		\begin{itemize}
			\item[{\rm (i)}] $J$ is not a perfect ideal.
			\item[{\rm (ii)}] For every  two distinct minimal generating syzygies  the sum of their degrees is at least $d+1$.
		\end{itemize}
	\end{Proposition}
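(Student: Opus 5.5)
The plan is to prove both implications by the contrapositive of (ii), using the Hilbert--Burch theorem together with a determinant/degree count. Write $J=(f_1,f_2,f_3)$ with the $f_i$ forms of degree $d$, and note that they are minimal generators. Indeed, if one generator were redundant, $J$ would be generated by two forms; since $\Ht J=2$, these would form a regular sequence, hence a complete intersection, hence perfect. In that degenerate case the syzygies of the three (non-minimal) generators would include a degree-$0$ syzygy, so it is cleaner to assume minimal generation throughout, and I will do so. Take $\delta_1\le\delta_2\le\cdots\le\delta_m$ as in \eqref{res-three-generated}.

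(ii)$\Rightarrow$(i), via $\neg$(i)$\Rightarrow\neg$(ii). Suppose $J$ is perfect of codimension $2$. By Hilbert--Burch, the resolution is $0\to R(-d-\delta_1)\oplus R(-d-\delta_2)\xrightarrow{\phi} R(-d)^3$, so $m=2$. Moreover the $f_i$ are, up to a unit $c$, the $2\times 2$ minors of $\phi$. A $2\times2$ minor of $\phi$ has degree $\delta_1+\delta_2$, while $f_i$ has degree $d$. Since $\Ht J=2$, the gcd multiplier is a unit, which forces $\delta_1+\delta_2=d$. So there are two distinct minimal syzygies with degree sum $d<d+1$, and (ii) fails.

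(i)$\Rightarrow$(ii), via $\neg$(ii)$\Rightarrow\neg$(i). Suppose two distinct minimal generating syzygies $s=(s_1,s_2,s_3)$ and $t=(t_1,t_2,t_3)$, of degrees $a$ and $b$, satisfy $a+b\le d$.
\begin{enumerate}
\item Form the $2\times2$ minors $\Delta_k$ of the $3\times2$ matrix $[s\,|\,t]$. The standard Cramer-type identity (expanding the $3\times 3$ determinant with rows $s$, $t$, $f$) shows that $f=(f_1,f_2,f_3)$ is orthogonal to both $s$ and $t$, and that over the fraction field $f$ is proportional to $\Delta=(\Delta_1,\Delta_2,\Delta_3)$, provided $s$ and $t$ are linearly independent.
\item Check that $s,t$ are linearly independent over $R$. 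If $\lambda s=\mu t$ with homogeneous $\lambda,\mu$, then since the syzygy module of a $3$-generated ideal of height $\ge2$ has rank $2$, this is possible in principle. To rule it out, use that $s$ and $t$ are part of a minimal generating set, reflexivity of the syzygy module, and the fact that $\gcd(f_1,f_2,f_3)=1$, which follows from height $2$. These give that a rank-one submodule is generated by a single primitive vector, so $s$ and $t$ would be associates, contradicting minimality. Hence $\Delta\ne0$.
\item Write $g\,f=h\,\Delta$ with $g,h$ coprime forms. Since $\gcd(f_i)=1$, $g$ divides all $\Delta_k$ and $h$ is a unit; so $\Delta=g f$. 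Comparing degrees gives $a+b=d+\deg g\ge d$. Combined with $a+b\le d$, this yields $a+b=d$ and $g$ a nonzero constant. Thus $J=I_2([s\,|\,t])$.
\item This is the main obstacle: concluding perfection from $J=I_2([s\,|\,t])$. Since $\Ht I_2=2$, the Hilbert--Burch (Eagon--Northcott) converse shows that $0\to R^2\xrightarrow{[s|t]}R^3\to J\to0$ is exact. Hence $\operatorname{pd} R/J=2=\Ht J$, and $J$ is perfect. This also shows that $s,t$ generate all syzygies, which is consistent with the setup.
\end{enumerate}

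The delicate points are the independence argument in step 2 and the use of the acyclicity criterion in step 4. For step 4, I would invoke Buchsbaum--Eisenbud: the complex is exact because $I_2$ has grade $2$ and $I_3=0$ is trivially fine for the zero map at the end.
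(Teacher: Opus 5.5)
Your proof is correct, and it does more than the paper does here. The paper gives no argument of its own: it cites \cite[Lemma 1.1 (iii)]{SimisTohaneanu} for (i)$\Rightarrow$(ii) when $n=3$, where ``not perfect'' coincides with ``not saturated'', and asserts that the argument extends to $n\geq 3$. It leaves (ii)$\Rightarrow$(i) implicit.

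\textbf{What your route adds.} Your contrapositive of (ii)$\Rightarrow$(i) is the expected Hilbert--Burch degree count: $f_i=c\,\Delta_i$ with $c\in k^*$ forces $\delta_1+\delta_2=d$. Your steps 1--4 give a self-contained proof of (i)$\Rightarrow$(ii) and make the paper's ``close scrutiny'' explicit. You only use that $R$ is a graded UFD, that ${\rm ht}\,J=2$ gives $\gcd(f_1,f_2,f_3)=1$ and ${\rm grade}\,J=2$, and the Buchsbaum--Eisenbud criterion; none of these depends on $n=3$. Steps 1, 3 and 4 actually show slightly more: for any two $R$-independent homogeneous syzygies of degrees $a,b$, one has $\Delta=g\,(f_1,f_2,f_3)$ with $\deg g=a+b-d\geq 0$, and $\deg g=0$ forces $J$ to be perfect. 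So minimality is needed only to obtain independence.

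\textbf{Step 2 can be tightened.} You need neither reflexivity nor $\gcd(f_i)=1$ there. Suppose $\lambda s=\mu t$ with $\lambda,\mu$ coprime forms. Then $s=\mu s'$ and $t=\lambda s'$ for some $s'\in R^3$, and $s'$ is a syzygy because $R$ is a domain. Graded Nakayama applied to the minimal generators $s,t$ forces $\lambda,\mu\in k^*$. That contradicts $s,t$ being distinct members of a minimal generating set.

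\textbf{Step 4 can be simplified.} The Buchsbaum--Eisenbud conditions are just ${\rm grade}\,I_2([s\,|\,t])\geq 2$ and ${\rm grade}\,I_1([f_1\ f_2\ f_3])\geq 1$. Both are immediate from $I_2([s\,|\,t])=J$, so the remark about $I_3=0$ can be dropped.
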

	The main implication of this proposition, namely, (i) $\Rightarrow$ (ii), is proved in \cite[Lemma 1.1 (iii)]{SimisTohaneanu} assuming that $n=3$, but a close scrutiny of the details shows that $n$ can be arbitrarily $\geq 3$.
	
	The following theorem extends to three arbitrary forms in $R=k[x_1,\ldots,x_n]$ ($n\geq 3$)  the result of \cite[Theorem 2.4]{Dimca_Sticlaru-Geom.Ded}, the latter in the case of the partial derivatives of a reduced plane curve. The present argument draws on the one in \cite[Theorem 2.4]{Dimca_Sticlaru-Geom.Ded} with due care to adjustments.  The use of Proposition~\ref{cod2_3gens} is pivotal.
	
	\begin{Theorem}\label{D-Sextended}
		Let $R=k[x_1,\ldots,x_n]$ be a standard graded polynomial ring in $n\geq 3$ variables over a field $k$.  Let $J$ stand for a non-perfect homogeneous ideal of $R$ of height $2$ generated by three forms of degree $d$. Letting  {\rm (\ref{res-three-generated})} above stand for its minimal graded free resolution, then $\delta_3\leq d$.
	\end{Theorem}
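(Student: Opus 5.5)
The plan is to argue by contradiction, adapting the cross-product argument of Dimca--Sticlaru to three arbitrary forms, and to finish with Proposition~\ref{cod2_3gens}. Write $J=(f_1,f_2,f_3)$, $\mathbf f=(f_1,f_2,f_3)$, and let $Z=\ker(\phi')\subset R(-d)^3$ be the syzygy module, minimally generated by $\rho_1,\ldots,\rho_m$ of degrees $\delta_1\le\cdots\le\delta_m$ as in (\ref{res-three-generated}). Suppose $\delta_3>d$. Then $\delta_i>d$ for every $i\geq 3$. Hence every homogeneous element of $Z$ of degree $\le d$ lies in the submodule $R\rho_1+R\rho_2$.

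\textbf{Step 1.} Consider the Koszul syzygies $\kappa_{ij}=f_je_i-f_ie_j$, which have degree exactly $d$. By the observation above, each can be written as $\kappa_{ij}=a_{ij}\rho_1+b_{ij}\rho_2$ with homogeneous $a_{ij},b_{ij}\in R$.

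\textbf{Step 2.} Show that $\rho_1$ and $\rho_2$ are linearly independent over $K={\rm Frac}(R)$. Since $J$ has height $2\ge 1$, the Koszul syzygies span a rank-$2$ subspace of $K^3$. That subspace is contained in $K\rho_1+K\rho_2$, so $\rho_1,\rho_2$ must be independent.

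\textbf{Step 3.} Form the cross product $v=\rho_1\times\rho_2\in R^3$, which is nonzero by Step 2. It is orthogonal (for the standard dot product) to $\rho_1$ and $\rho_2$, and $\mathbf f$ is orthogonal to them too. The orthogonal complement of the $2$-dimensional space $K\rho_1+K\rho_2$ is one-dimensional, so $v=g\,\mathbf f$ for some $g\in K$. Because ${\rm ht}\,J=2$, the $f_i$ have no common factor, and unique factorization forces $g=h\in R$. Comparing degrees gives $\deg h=\delta_1+\delta_2-d$.

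\textbf{Step 4.} Use bilinearity of the cross product to compute
$$\kappa_{12}\times\kappa_{13}=(a_{12}b_{13}-a_{13}b_{12})\,\rho_1\times\rho_2=(a_{12}b_{13}-a_{13}b_{12})\,h\,\mathbf f .$$
A direct computation shows that $\kappa_{12}\times\kappa_{13}$ equals $\pm f_1\mathbf f$. Comparing a nonzero coordinate, $h$ divides $f_1$; doing the same with the other pairs of Koszul syzygies, $h$ divides $f_2$ and $f_3$. Since the $f_i$ have no common factor, $h$ is a nonzero constant, so $\delta_1+\delta_2=d$.

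\textbf{Step 5.} Now $\rho_1,\rho_2$ are two distinct minimal generating syzygies with $\delta_1+\delta_2=d<d+1$. This violates condition (ii) of Proposition~\ref{cod2_3gens}, so $J$ would be perfect, contradicting the hypothesis. Therefore $\delta_3\le d$.

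The main obstacle is Step 3, namely making rigorous that $\rho_1\times\rho_2$ is a polynomial multiple of $\mathbf f$. This rests on the independence from Step 2 and on the gcd of the $f_i$ being $1$, which is exactly where the codimension-$2$ hypothesis enters. Some care is also needed with the sign and indexing in the identity $\kappa_{12}\times\kappa_{13}=\pm f_1\mathbf f$; the cleanest way is to check it directly from the definition of the cross product.
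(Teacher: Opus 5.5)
Your proof is correct, but it takes a different route from the paper's.

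\textbf{The paper's route.} It first proves that ${\rm Syz}(J)/R\mathfrak{z}_0$ is torsion free for a minimal-degree syzygy $\mathfrak{z}_0$, using $\mathfrak{z}\mapsto\mathfrak{z}_0\wedge\mathfrak{z}$ and the fact that the entries of $\mathfrak{z}_0$ have no common factor. Assuming $\delta_3>d$, it then substitutes $\mathbf{k}_{i,j}=a_{i,j}\mathfrak{z}_0+b_{i,j}\mathfrak{z}_1$ into the Koszul relation among the $\mathbf{k}_{i,j}$, which gives $\alpha\mathfrak{z}_0+\beta\mathfrak{z}_1=0$. Torsion-freeness forces $\beta=0$, so $(b_{2,3},b_{1,3},b_{1,2})$ is a syzygy of degree $d-\delta_2$. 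Proposition~\ref{cod2_3gens} is used at this point, as $d-\delta_2\leq\delta_1-1$, to make that syzygy vanish. Then every $\mathbf{k}_{i,j}$ is a multiple of $\mathfrak{z}_0$, contradicting that $f_1,f_2$ is a regular sequence.

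\textbf{Your route.} You work instead with the $2\times 2$ minors of $[\rho_1\,\rho_2]$. You get $\rho_1\times\rho_2=h\mathbf{f}$, and bilinearity together with the Koszul cross products makes $h$ a common divisor of the $f_i$. This needs neither the torsion-freeness claim nor the regular-sequence step. The identities you flag as delicate hold exactly, with no signs: $\kappa_{1,2}\times\kappa_{1,3}=f_1\mathbf{f}$, $\kappa_{1,2}\times\kappa_{2,3}=f_2\mathbf{f}$, and $\kappa_{1,3}\times\kappa_{2,3}=f_3\mathbf{f}$.

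\textbf{What your route buys.} Once $h$ is a unit, $\mathbf{f}$ is a scalar multiple of the vector of signed $2$-minors of the $3\times 2$ matrix $[\rho_1\,\rho_2]$. Since ${\rm ht}\,J=2$, the Hilbert--Burch theorem (or the Buchsbaum--Eisenbud criterion) already makes $0\to R^2\to R^3\to J\to 0$ exact, so $J$ is perfect. Hence your Step~5 can bypass Proposition~\ref{cod2_3gens} entirely, giving a self-contained argument. The paper's proof, by contrast, uses that proposition in an essential place, and the paper imports it from the case $n=3$ in the literature.
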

	\begin{proof} 
		Say, $J=\langle f_1,f_2,f_3\rangle$,  assuming as we may that $\{f_1,f_2\}$ is a regular sequence.
		Let 
		$${\rm Syz}(J):=\phi \left(\bigoplus_{i=1}^{m} R(-d-\delta_i)\right)=\ker \left(R(-d)^3\to R\right)$$ 
		stand for the module of first syzygies of $J$ with respect to the set of generators $f_1,f_2,f_3.$ 
		
		{\sc Claim.}  If $\mathfrak{z}_0\in {\rm Syz}(J)$ is a nonzero syzygy of minimal degree ($=\delta_1$) then the $R$-module ${\rm Syz}(J)/R\mathfrak{z}_0$ is torsion free.

		The proof of the claim will consist in showing that ${\rm Syz}(J)/R\mathfrak{z}_0$ embeds as a submodule of the free module $\bigwedge^2R^3$ by exhibiting a nonzero $R$-modulo map $v:{\rm Syz}(J)\rightarrow \bigwedge^2R^3$ whose kernel is generated by $\mathfrak{z}_0$.
		The required map will be the restriction map of the nonzero $R$-modulo map $V: R^3\rightarrow  \bigwedge^2R^3$, defined as follows by means of its effect on the canonical basis
		${\bf e_1}=	[\begin{matrix}1&0&0\end{matrix}]^t, {\bf e_2}=	[\begin{matrix}0&1&0\end{matrix}]^t, {\bf e_3}=	[\begin{matrix}0&0&1\end{matrix}]^t:$
		$$V(\bf e_i):=\mathfrak{z}_0\wedge e_i.$$
		
		Now, write ${\mathfrak z}_0=(p_1 \ p_2 \ p_3)^t$, as an element of ${\rm Syz}(J)\subset R^3$.
		Let ${\mathfrak z}=(q_1 \ q_2 \ q_3)^t\in{\rm Syz}(J)$ be arbitrary.

		Then 
		$$v(\mathfrak{z})=\det\left[\begin{matrix}p_1&q_1\\p_2&q_2\end{matrix}\right]{\bf e}_1\wedge{\bf e}_2+\det\left[\begin{matrix}p_2&q_2\\p_3&q_3\end{matrix}\right]{\bf e}_1\wedge{\bf e}_3+\det\left[\begin{matrix}p_1&q_1\\p_3&q_3\end{matrix}\right]{\bf e}_2\wedge{\bf e}_3.$$
		Hence, $\mathfrak{z}\in\ker v$ if, and only if, the rank of the matrix 
		$$\left[
		\begin{array}{cc}
			p_1 & q_1 \\
			p_2 & q_2 \\
			p_3 & q_3 
		\end{array}
		\right]
		$$
		is one. In particular, in this case, there are  nonzero  coprime $r,s\in R$ such that $rp_i=sq_i$,
		for every $1\leq i\leq 3.$ Since $\mathfrak{z}_0$ is a syzygy of minimal degree, $\langle p_1,p_2,p_3\rangle$ has height at least $2$.
		Therefore,  $s$ is a nonzero element of $k.$ Thus, $\mathfrak{z}\in R\mathfrak{z}_0$ as was to be shown.
		
		To proceed with the proof of the main statement, suppose by way of contradiction that  $\delta_3>d.$
		Then, due to the Koszul syzygies
		which live in degree $d$, and the assumption that  $\delta_1,\delta_2$ are degrees of minimal generators, we know that $\delta_1,\delta_2\leq d.$ Moreover, because a next minimal degree of a generator is assumed to be greater than $d$,  the Koszul syzygies $\mathbf{k}_{i,j}$ can be written in terms of the syzygy $\mathfrak{z}_0$ of degree $\delta_1$, and a syzygy $\mathfrak{z}_1$ of  degree  $\delta_2$, say$:$
		$$\mathbf{k}_{2,3}=a_{2,3}\mathfrak{z}_0+b_{2,3}\mathfrak{z}_1,\quad \mathbf{k}_{1,3}=a_{1,3}\mathfrak{z}_0+b_{1,3}\mathfrak{z}_1,\quad \mathbf{k}_{1,2}=a_{1,2}\mathfrak{z}_0+b_{1,2}\mathfrak{z}_1$$
		for some homogeneous polynomials $a_{2,3},a_{1,3}, a_{1,2}\in R_{d-\delta_1}$ and $b_{2,3},b_{1,3}, b_{1,2}\in R_{d-\delta_2}.$ 
		
		Confronting with the relation
		$$f_1\mathbf{k}_{2,3}+f_2\mathbf{k}_{1,3}+f_3\mathbf{k}_{1,2}=0$$
		yields
		\begin{equation}\label{torsion}
			\alpha\mathfrak{z}_0+\beta\mathfrak{z}_1=0,
		\end{equation}
		where
		$$\alpha=a_{2,3}f_1+a_{1,3}f_2 +a_{1,2}f_3\quad \mbox{and}\quad \beta=b_{2,3}f_1+b_{1,3}f_2 +b_{1,2}f_3.$$
		
		If $\beta\neq 0$ then \ref{torsion} implies that the residual class of $\mathfrak{z}_1$ in ${\rm Syz}(J)/R\mathfrak{z}_0$ is a non-zero torsion element, contradicting the above claim.
		
		Now, since $J$ is not perfect, by Proposition~\ref{cod2_3gens}, $d-\delta_2\leq \delta_1-1.$
		Thus, if at the other end, $\beta=b_{2,3}f_1+b_{1,3}f_2 +b_{1,2}f_3=0$ then $b_{2,3}=b_{1,3}=b_{1,2}=0$ because the minimal degree of a nonzero syzygy is $\delta_1.$ 
		In particular, $\mathbf{k}_{2,3}=a_{2,3}\mathfrak{z}_0$ and $\mathbf{k}_{1,3}=a_{1,3}\mathfrak{z}_0$, and hence $\{f_1,f_2\}\subset \langle p_3\rangle$,
		a contradiction since $\{f_1,f_2\}$ is a regular sequence.
	\end{proof}

	In this paper we are interested in the case where $J$ stands for a homogeneous ideal of $R$ generated by three forms of degree $d$, of homological dimension exactly $2.$ In particular, the minimal graded free resolution of $J$ takes the form
	\begin{equation}\label{res-prelim}
		0\to \bigoplus_{j=1}^{m-2} R(-D_j)\to \bigoplus_{i=1}^{m} R(-d-\delta_i)\to R(-d)^3\to R,
	\end{equation}
	where $m\geq 3$, for suitable shifts $\delta_1\leq \delta_2\leq \delta_3\leq\cdots\leq \delta_m,$ and $D_1\leq D_2\leq \cdots\leq D_{m-2}.$
	According to \cite[Lemma 1.1]{HS},
	\begin{equation}\label{bigsumHS}
		D_j=d+\delta_{j+2}+\epsilon_j \quad 1\leq j\leq m-2
	\end{equation}
	for certain positive integers $\epsilon_j\geq 1.$  
	We can also deduce, similarly to \cite[Formula (13)]{HS}, and regardless of the dimension $\geq 3$ of $R$, that  
	\begin{equation}\label{sumHS}
		\delta_1+\delta_2=d+\sum_{j=1}^{m-2}\epsilon_j.
	\end{equation} 
	Namely, drawing upon the free resolution \eqref{res-prelim},  the Hilbert series of $R/I$ is
	\begin{equation}\label{HSerie}
		\frac{1-3t^d+\sum_{i=1}^{m}t^{d+\delta_i}-\sum_{j=1}^{m-2}t^{d+\delta_{j+2}+\epsilon_j}}{(1-t)^n}
	\end{equation}
	Taking $t$-derivatives of the numerator of \eqref{HSerie}
	evaluated at $t = 1$ (see \cite[Corollary 4.1.14]{BHbook}), one obtains the desired relation.

\section{Level matrices and nature of shifts}

	Let $R=k[x_1,\ldots,x_n]$ denote a standard graded polynomial ring in $n\geq3$ variables over a field $k$ -- to be fixed throughout unless explicitly stated. As seen in Subsection~\ref{D-S}, if $J$ is a homogeneous ideal of $R$ of height 2 generated by three forms of degree $d$, with homological dimension $2$, then, its minimal graded free resolution has the form
\begin{equation}\label{res-prelim-bis}
	0\to \bigoplus_{j=1}^{m-2} R(-d-\delta_{j+2}-\epsilon_j)\stackrel{\psi}\to \bigoplus_{i=1}^{m} R(-d-\delta_i) \stackrel{\varphi}\to R(-d)^3\to R ,
\end{equation}
where $m\geq 3,$  $\delta_1\leq \delta_2\leq \delta_3\leq\cdots\leq \delta_m,$ and $\epsilon_1,\ldots,\epsilon_{m-2}$ are positive integers satisfying the following conditions

\begin{equation}\label{crescente}
		\delta_{3}+\epsilon_{1}\leq \cdots\leq \delta_m+\epsilon_{m-2} \quad (\mbox{from Formula~\eqref{bigsumHS}})
\end{equation}
\begin{equation}\label{sumHS-c1}
	\delta_1+\delta_2=d+\sum_{j=1}^{m-2}\epsilon_j, \quad (\mbox{as in Formula~\eqref{sumHS}})
\end{equation} 
\begin{equation}\label{c2}
	\delta_i+\delta_j\geq {d+1} \,\,\mbox{for every}\,\,1\leq i<j\leq m, \quad (\mbox{as in Proposition~\ref{cod2_3gens}})
\end{equation}
and 
\begin{equation}\label{c3}
	\delta_3\leq d. \quad (\mbox{by Theorem~\ref{D-Sextended}})
\end{equation}

A set of integers $d\geq 1,$ $m\geq 3,$ $1\leq\delta_1\leq \cdots\leq\delta_m $ and $\epsilon_1,\ldots,\epsilon_{m-2}\geq 1$ satisfying the above conditions \eqref{crescente}, \eqref{sumHS-c1}, \eqref{c2} and \eqref{c3} is what one called {\em latent data} in the Introduction. 
One keeps the same   notation $(d, m, \underline{\delta},\underline{\epsilon})$. 
One is interested in the following converse-like problem:

\begin{Problem}\label{mainquestion}\rm
	Given a set of latent data $(d, m, \underline{\delta},\underline{\epsilon})$  is there  a homogeneous ideal  $J$ of $R$ of height 2 generated by three forms of degree $d$, of homological dimension $2$, such that the minimal graded free resolution of $J$ has the form in  \eqref{res-prelim-bis}$?$
\end{Problem}

\subsection{Level matrices and ideals of maximal  minors fixing a submatrix}

 In this subsection one approaches Problem~\ref{mainquestion} by evoking an explicit method  based on the free resolutions of  ideals of maximal minors fixing  a submatrix, as approached by Andrade-Simis in \cite{AnSi1986}. 
 
Throughout,  $(d,m, \underline{\delta},\underline{\epsilon})$ denotes a set of latent data, as introduced  previously. 
\begin{Remark}\label{prop_of_latent}\rm
Note that, for $1\leq i\leq m-2$ and $1\leq j\leq m$ such that $j\leq i+2$, one has $\delta_{i+2}-\delta_j+\epsilon_i>0.$
\end{Remark}
This follows because
$ j\leq i+2$ implies $\delta_j\leq \delta_{i+2}$, since the sequence of the $\delta$'s is non-decreasing.
Hence,
$\delta_{i+2}-\delta_j\geq 0$.
Adding $\epsilon_i$ becomes positive as the latter is positive.

This observation is relevant in that it guarantees the meaning of  item $B$ in the notion of a level matrix associated to these latent data as in Setup~\ref{all_setup}, which one now reinstates for the  reader's convenience.

\begin{Definition}\label{def_level}\rm
An $(m+1)\times m$ vertical block matrix $\eta:=\left[\begin{array}{c} 
	A \\ 
	\hline 
	B \end{array}\right]$ is said to be {\em $(d,m,\underline{\delta}, \underline{\epsilon})$-level}  if it satisfies the following conditions:
	
	$\bullet$ $A:=(a_{i,j})$ is a $3\times m$ matrix with entries in $R$ such that $a_{i,j}$ is a homogeneous polynomial of degree $d-\delta_j$ if  $d-\delta_j\geq 0, $ and $a_{i,j}=0$ if $d-\delta_j< 0$.
	
$\bullet$  $B:=(b_{i,j})$ is an $(m-2)\times m$ matrix with entries in $R$ such that $b_{i,j}$ is a homogeneous polynomial of degree $\delta_{i+2}-\delta_j+\epsilon_i$ if $\delta_{i+2}-\delta_j+\epsilon_i>0$, and $b_{i,j}=0$ if $\delta_{i+2}-\delta_j+\epsilon_i\leq 0.$

$\bullet$ ${\rm ht\,}I_m(\eta)=2$, and ${\rm ht\,}I_{m-2}( B)=3$.
\end{Definition}

Matrices fulfilling the conditions of $\eta$ above are quite natural, with the shape of the lower block $B$  having been around in the literature. 
Here is one illustration.

\

\begin{Proposition}\label{exlevel}
Let $(d,m, \underline{\delta},\underline{\epsilon})$ be a set of latent data satisfying the following additional condition$:$ 
\begin{equation}\label{condition*}
		\delta_{i+2}-\delta_{j}+\epsilon_{i}>0  \quad  \mbox{for each $1\leq i\leq m-2$ and $1\leq j\leq m$  such that $j=i+3$}.
\end{equation}
Then the following $(m+1)\times m$ block matrix over $k[x,y,z]$ is a $(d,m, \underline{\delta},\underline{\epsilon})$-level matrix$:$
{\small
\begin{equation*}
	\eta:=\left[	\begin{array}{ccccccccccc}
		x^{\alpha_{1,1}}\\
		y^{\alpha_{2,1}}&x^{\alpha_{2,2}}&\\
		&y^{\alpha_{3,2}}&x^{\alpha_{3,3}}\\
		\hline\\ [-5pt]
	0&z^{\beta_{1,2}}&y^{\beta_{1,3}}&x^{\beta_{1,4}}&0&0&\cdots&0&0&0&\\
	0&0&z^{\beta_{2,3}}&y^{\beta_{2,4}}&x^{\beta_{2,5}}&0&\cdots&0&0&0&\\
	0&0&0&z^{\beta_{3,4}}&y^{\beta_{3,5}}&x^{\beta_{3,6}}&\cdots&0&0&0&\\
	\vdots & \vdots & \vdots & \vdots & \vdots & \vdots&\vdots&\vdots&\vdots&\vdots&\\
	0&0&0&0&0&0&\cdots&z^{\beta_{m-3,m-2}}&y^{\beta_{m-3,m-1}}& x^{\beta_{m-3,m}}&\\
	x^{\beta_{m-2,1}}&0&0&0&0&0&\cdots&0&z^{\beta_{m-2,m-1}}&y^{\beta_{m-2,m}}&	
	\end{array}	\right],
\end{equation*}
}
where $\alpha_{i,j}=d-\delta_j$, $\beta_{i,j}=\delta_{i+2}-\delta_j+\epsilon_i,$ and the empty slots are null entries.
\end{Proposition}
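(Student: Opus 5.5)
The plan is to verify the three requirements of Definition~\ref{def_level} directly: the degree pattern of the entries, ${\rm ht\,}I_{m-2}(B)=3$, and ${\rm ht\,}I_m(\eta)=2$. In each case I will exhibit suitable minors that become triangular, possibly after reducing modulo a variable.

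\emph{Degrees.} The only nonzero entries of $A$ lie in columns $1,2,3$. They are $x^{d-\delta_j}$ or $y^{d-\delta_j}$ with $j\leq 3$, and $d-\delta_j\geq 0$ holds by $\delta_j\leq\delta_3\leq d$, which is \eqref{c3}. Every other entry of $A$ is zero, and the zero polynomial is admissible as a homogeneous form of any degree. In row $i$ of $B$ the nonzero entries sit in columns $i+1,i+2$ (with $z$ and $y$) and in column $i+3$ (with $x$), except for the last row, whose $x$ sits in column $1$. For columns $j\leq i+2$ the exponents $\beta_{i,j}$ are positive by Remark~\ref{prop_of_latent}. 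For $j=i+3$ positivity is exactly the extra hypothesis \eqref{condition*}. Hence all nonzero entries have the prescribed positive degrees. Every position where $\delta_{i+2}-\delta_j+\epsilon_i\leq 0$ is left empty, as required.

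\emph{${\rm ht\,}I_{m-2}(B)=3$.} Since the ambient ring is $k[x,y,z]$, the height is at most $3$, so it suffices to show that the radical of $I_{m-2}(B)$ contains $x$, $y$ and $z$.
\begin{itemize}
\item Take the columns $4,\dots,m,1$ in this order. The $x$-entries then lie on the diagonal, and the $y$- and $z$-entries fall strictly below it. The minor is therefore a pure power of $x$.
\item Take the columns $2,\dots,m-1$. The $z$-entries lie on the diagonal and the $y$- and $x$-entries lie above it. The corner entry $x^{\beta_{m-2,1}}$ sits in column $1$, which has been discarded. This minor is a pure power of $z$.
\item Reduce modulo $(x,z)$ and take the columns $3,\dots,m$. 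The submatrix becomes diagonal with $y$-powers on the diagonal. So the corresponding minor is a power of $y$ modulo $(x,z)$.
\end{itemize}
Hence any prime containing $I_{m-2}(B)$ contains $x$ and $z$, and then also $y$. This gives height $3$.

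\emph{${\rm ht\,}I_m(\eta)=2$.} The Eagon--Northcott bound gives height at most $(m+1)-m+1=2$.
\begin{itemize}
\item Delete the last row of $B$. The remaining $m\times m$ matrix, with rows $A_1,A_2,A_3,B_1,\dots,B_{m-3}$, has its $x$-entries on the diagonal and its $y$- and $z$-entries strictly below. Thus $I_m(\eta)$ contains a pure power of $x$.
\item Delete the first row of $A$ and reduce modulo $x$. The rows $A_2,A_3,B_1,\dots,B_{m-2}$ then have $y$-powers on the diagonal and $z$-powers immediately below it. All $x$-entries vanish, including the corner $x^{\beta_{m-2,1}}$. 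So this minor is a nonzero power product of $y$ modulo $x$, and in particular it does not lie in $(x)$.
\end{itemize}
Any prime containing $I_m(\eta)$ contains $x$ and also an element outside $(x)$. Therefore it has height at least $2$.

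The main obstacle is the bookkeeping of positions in the cyclically shifted lower block $B$, especially the wrap-around entry $x^{\beta_{m-2,1}}$. One must choose the column orders so that this entry either drops out or lands below the diagonal. I expect this choice, together with the reduction modulo $x$ (respectively modulo $(x,z)$) to dispose of the $y$-minor, to be the only delicate point.
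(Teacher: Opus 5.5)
\paragraph{Where the proof breaks.} Your degree check is sound. So are your three minors showing $\sqrt{I_{m-2}(B)}\supseteq\langle x,y,z\rangle$, which are more explicit than the paper's ``by the shape of $B$''. Your two maximal minors of $\eta$ are the right ones: deleting the last row of $B$, respectively the first row of $A$, gives the paper's $p_{m+1}$ and $p_1$, so you are following the paper's route.

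The step that fails is the reduction of the second minor modulo $x$. You claim that all $x$-entries vanish. But the entries $x^{\alpha_{2,2}}$ and $x^{\alpha_{3,3}}$ of $A$ have exponents $d-\delta_2$ and $d-\delta_3$, and these are $0$ as soon as $\delta_2=d$ or $\delta_3=d$. That case is admissible and common, since $\delta_3\leq d$ is the sharp bound. The entries are then equal to $1$, they sit directly above the diagonal in your row/column ordering, and the reduced matrix is not triangular.

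For a concrete failure, take $d=3$, $m=4$, $\underline{\delta}=(2,3,3,3)$, $\underline{\epsilon}=(1,1)$. These satisfy all the latent-data conditions and \eqref{condition*}. The rows $A_2,A_3,B_1,B_2$ are $(y,1,0,0)$, $(0,1,1,0)$, $(0,z,y,x)$, $(x^2,0,z,y)$. Their determinant is congruent to $y^2(y-z)$ modulo $x$, which is not a power of $y$.

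\paragraph{How to repair it.} The conclusion you need, that this minor is not in $\langle x\rangle$, is still true, and the fix is easy.

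\emph{Reduce modulo $\langle x,z\rangle$ instead.} Every $x$- and $z$-entry of $B$ has positive exponent and vanishes, including the corner $x^{\beta_{m-2,1}}$. What remains is upper triangular with $y$-powers (possibly $1$) on the diagonal. Hence the minor is $\equiv y^N\not\equiv 0$ modulo $\langle x,z\rangle$, so it is not in $\langle x\rangle$.

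\emph{Or reduce modulo $z$, as the paper does.} The same matrix becomes cyclic bidiagonal, and $p_1\equiv y^{a}\pm x^{b}$ with $a,b>0$. This is coprime to the pure $x$-power minor regardless of which exponents in $A$ vanish.

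\paragraph{A separate point on the upper bound.} Your Eagon--Northcott bound tacitly needs $I_m(\eta)\neq R$.
\begin{itemize}
\item For $m\geq 4$ this holds, because every maximal minor of $\eta$ uses a row of $B$, whose entries lie in $\langle x,y,z\rangle$.
\item For $m=3$ with $\delta_1=\delta_2=\delta_3=d$ and $\epsilon_1=d$, the data are legitimate and \eqref{condition*} is vacuous. But $A$ is then unitriangular, so $\det A=1\in I_3(\eta)$. There the statement itself breaks down, not just your proof, and the paper's proof does not catch this either.
\end{itemize}
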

\begin{proof}
One has to show  that $\Ht I_m(\eta)\geq 2$ and $\Ht I_{m-2}(B)\geq 3$, for which one argues as follows.
	
	Let $p_1,\ldots,p_{m+1}$ be the ordered signed maximal minors of $\eta.$ 
	
	Note that
	\begin{eqnarray*}
	p_1&\equiv & y^{\alpha_{2,1}+\alpha_{32}+\beta_{13}+\cdots+\beta_{m-2m}}+x^{\beta_{m-21}+\alpha_{22}+\alpha_{33}+\beta_{1,4}+\cdots+\beta_{m-3m}} \: (\bmod z)\\
	p_{m+1} &\equiv & x^{\alpha_{11}+\alpha_{22}+\alpha_{33}+\beta_{1,4}+\cdots+\beta_{m-3m}}\: (\bmod z).
	\end{eqnarray*}
	Since the respective images of $p_1$ and $p_{m+1}$ in $k[x,y,z]/\langle z\rangle$ are coprime, then $p_1,p_{m+1}$ are coprime in $k[x,y,z].$ Thus, $\Ht I_{m}(\eta)\geq 2.$ 
	
	On the other hand, by the shape of the lower block $B$ one sees that the regular sequence $\{x,y,z\}$ is contained in any prime ideal containing $I_{m-2}(B).$ Therefore, $\Ht I_3(B)\geq 3.$

This wraps up the argument.
\end{proof}

\medskip

To endorse the relevance of condition \eqref{condition*} in Proposition~\ref{exlevel},  consider the following example.

\begin{Example}\rm Let $d=2,$ $m=4,$ $\underline{\delta}=(2,2,2,\delta_4)$  ($\delta_4\geq 3)$ and $\underline{\epsilon}=(1,1).$ 
	One can readily verify that this choice entails a latent data $(d,m,\underline{\delta},\underline{\epsilon})$. Note that $d-\delta_4\leq -1$ and $\delta_3-\delta_4+\epsilon_1\leq 0.$  Thus,  a matrix $\eta$ satisfying the first two conditions in the definition of a $(d,m,\underline{\delta},\underline{\epsilon})$-level matrix must have the following format:
	
	$$\eta=\left[\begin{matrix}
		a_{1,1}&a_{1,2}&a_{1,3}&0\\
		a_{2,1}&a_{2,2}&a_{2,3}&0\\
		a_{3,1}&a_{3,2}&a_{3,3}&0\\
		b_{1,1}&b_{1,2}&b_{1,3}&0\\
		b_{2,1}&b_{2,2}&b_{2,3}&b_{2,4}\\
	\end{matrix}\right].$$
	But, in this case, $I_{4}(\eta)\subset\langle b_{2,4}\rangle,$ hence the third defining condition breaks down. Therefore,  a $(d,m,\underline{\delta},\underline{\epsilon})$-level matrix is not available in this case.
\end{Example}
In particular, by Theorem~\ref{mainthm}, no almost Cohen--Macaulay codimension $2$ ideal generated by $3$ forms of degree $2$ exists for which the shifts of its free resolution afford the above latent data. This can be verified directly, with no appeal to Theorem~\ref{mainthm}, as the free resolution would have to be of the form
$$0\to R(-5)\oplus R(-6)\to R(-4)^3\oplus R(-5)\to R(-2)^3\to R.$$
Computing the numerator of the Hilbert series with respect to a variable $t$, taking second $t$-derivatives and evaluating at $t=1$ yields a vanishing multiplicity, which is absurd.

\medspace

The result below is somewhat inspired by the contents of \cite{AnSi1981} and \cite{AnSi1986},  giving it extra precision in the case of level matrices.

\begin{Proposition}\label{AS-implies-ideal-prescribed-shifts} 
	Let $\eta:=\left[\begin{array}{c} 
		A \\ 
		\hline 
		B \end{array}\right]$ denote a  $(d,m, \underline{\delta},\underline{\epsilon})$-level matrix over $R$. Order the  signed maximal minors of $\eta$ in such a way that $p_1,p_2$ and $p_3$ are the three maximal minors of $\eta$ fixing the lower block $B.$ Then, the minimal graded free resolution of $J:=\langle p_1,p_2,p_3\rangle\subset R$ is
	\begin{equation}
		0\to \bigoplus_{j=1}^{m-2} R(-d-\delta_{j+2}-\epsilon_j)\stackrel{B^t}\lar \bigoplus_{i=1}^{m} R(-d-\delta_i)\stackrel{AK}\lar R(-d)^3\stackrel{[p_1\,p_2\,p_3]}\lar R ,
	\end{equation}
with
\begin{equation}\label{KassociatedtoB}
K=\left[\begin{matrix}
	0&\sigma_{1,2}\Delta_{1,2}&\cdots&\sigma_{1,m}\Delta_{1,m}\\
	\sigma_{2,1}\Delta_{2,1}&0&\cdots&\sigma_{2,m}\Delta_{2,m}\\
	\vdots&\vdots&\ddots&\vdots\\
	\sigma_{m,1}\Delta_{m,1}&\sigma_{m,2}\Delta_{m,2}&\cdots&0
\end{matrix}\right],
\end{equation}
where, for every $1\leq i,j\leq m$ with $i\neq j,$ $\Delta_{i,j}$ is the maximal minor of $B$ obtained by omitting the $i$th and $j$th columns and $\sigma_{i,j}=(-1)^{i-j}.$ 
\end{Proposition}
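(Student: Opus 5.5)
The plan is to show that the displayed sequence is a graded complex, that it is acyclic, and that it is minimal. Exactness will come from the Buchsbaum--Eisenbud acyclicity criterion, and the degree bookkeeping from the definition of a level matrix.

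\emph{Step 1: the sequence is a complex.} First, $BK=0$. The $(l,j)$ entry of $BK$ is $\sum_i b_{l,i}\sigma_{i,j}\Delta_{i,j}$. Up to a global sign this is the Laplace expansion of the $(m-1)\times(m-1)$ determinant of $B$ with its $j$th column deleted and its $l$th row repeated, so it vanishes. Since $K$ is skew-symmetric, $KB^t=-(BK)^t=0$, and therefore $(AK)B^t=0$.

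For $[p_1\,p_2\,p_3]AK=0$ I will use the structure of $K$. Over the fraction field, $B$ has rank $m-2$, so its kernel is spanned by two vectors $v,w$. By the standard description of the adjugate-type matrix of complementary $(m-2)$-minors, $K=c\,(vw^t-wv^t)$ for some scalar $c$, so $K$ has rank $2$. A generalized Laplace expansion of a maximal minor of $\eta$ fixing $B$ along the two remaining rows $a_r,a_s$ of $A$ gives $p_t=\pm a_rKa_s^t$ for $\{r,s,t\}=\{1,2,3\}$. Substituting $K=c(vw^t-wv^t)$ shows that $(p_1,p_2,p_3)$ is, up to the factor $c$ and signs, the cross product $(Av)\times(Aw)$. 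Hence $[p]Av=[p]Aw=0$, which gives $[p]AK=0$. The signs $\sigma_{i,j}$ must be tracked so that the two descriptions of $p$ agree.

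\emph{Step 2: degrees and minimality.} Assign the shifts of the statement. An entry $a_{r,l}\sigma_{l,j}\Delta_{l,j}$ of $AK$ must be homogeneous of degree $\delta_j$. To check this, compute $\deg\Delta_{l,j}$ from the degrees $\delta_{i+2}-\delta_{c}+\epsilon_i$ of $B$ together with the identity $\delta_1+\delta_2=d+\sum\epsilon_j$. The zero conventions in the definition of a level matrix (entries set to $0$ when the prescribed degree is nonpositive) keep every product homogeneous of the right degree. Minimality then follows because all entries of $B^t$ have positive degree or are $0$, all entries of $AK$ have degree $\delta_j\ge 1$, and the $p_t$ have degree $d\ge 1$.

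\emph{Step 3: acyclicity.} The expected ranks are $m-2$, $2$, and $1$, and they add up correctly: $(m-2)+2=m$ and $2+1=3$. By Buchsbaum--Eisenbud it suffices to show $\Ht I_{m-2}(B^t)\ge 3$, $\Ht I_2(AK)\ge 2$ and $\Ht J\ge 1$. The first holds by hypothesis. For the others, localize at a prime $P$ with $\Ht P\le 2$. Then $P\not\supseteq I_{m-2}(B)$, so some $(m-2)$-minor of $B$ is a unit and $B$ splits off. After column operations $\eta_P$ becomes a $3\times 2$ matrix $A'$ bordering an identity block. Locally $I_m(\eta)_P=I_2(A')=J_P$, and $AK$ becomes, up to invertible factors, the Hilbert--Burch-type map whose $2\times 2$ minors generate $J_P$. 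Hence $J$ and $I_m(\eta)$ agree at all primes of height $\le 2$, so $\Ht J=\Ht I_m(\eta)=2$. Likewise $I_2(AK)_P=R_P$ unless $P\supseteq J$, which forces $\Ht I_2(AK)\ge 2$.

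This yields exactness. The resolution has length $2$ with $\Ht J=2$, and $J$ is not perfect because the last module is nonzero whenever $m\ge 3$.

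I expect the main obstacle to be Step 1 combined with the local computation in Step 3. That is: proving the rank-two factorization of $K$ and the identification $p_t=\pm a_rKa_s^t$ with consistent signs, and verifying that localizing at a unit minor of $B$ really turns $AK$ into a presentation whose $2\times2$ minors generate the localized $p$'s. I would first try to derive all of this from Plücker-type relations among the $\Delta_{i,j}$, namely that the $4\times4$ Pfaffians of $K$ vanish.
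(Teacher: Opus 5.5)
Your proof is correct, but it reaches the key points by a different route than the paper.

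\textbf{Step 1 (the complex property).} The paper's argument is shorter. Since $[p_1\,\cdots\,p_{m+1}]\eta=0$, one has $[p_1\,p_2\,p_3]A=-[p_4\,\cdots\,p_{m+1}]B$. Together with $BK=0$ from the Buchsbaum--Rim complex, this gives $[p_1\,p_2\,p_3]AK=0$ in one line. So your rank-two factorization $K=c(vw^t-wv^t)$ and the cross-product identification of $(p_1,p_2,p_3)$ are valid but not needed here. That circle of ideas is what the paper uses later, in Lemma~\ref{compoundskwerank2} and Proposition~\ref{pre-level}(a), to prove $I_2(AK)=J\,I_{m-2}(B)$. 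Your degree and minimality bookkeeping in Step 2 is the same as the paper's.

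\textbf{Step 3 (exactness).} Here the routes genuinely differ. The paper quotes \cite[Theorem D]{AnSi1986} as a black box, fed by the two height hypotheses $\Ht I_m(\eta)=2$ and $\Ht I_{m-2}(B)=3$. You instead prove the needed case directly, using Buchsbaum--Eisenbud and a local normal form at primes $P$ of height $\le 2$. At such primes $I_{m-2}(B)_P=R_P$, and so $J_P=I_m(\eta)_P=I_2(AK)_P=I_2(A')$.

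What this buys:
\begin{itemize}
\item It is self-contained.
\item It yields $\Ht J=2$ in the same stroke. The paper obtains this only afterwards, through Proposition~\ref{pre-level}(c).
\item Your local identities are exactly the localizations of the paper's global relations $I_2(AK)=J\,I_{m-2}(B)$ and $I_m(\eta)\,I_{m-2}(B)^2\subset I_2(AK)$.
\end{itemize}

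To make the ``up to invertible factors'' step airtight, use the following description of $K$. For $u,v\in R^m$, $u^tKv$ is, up to a global sign, the determinant of the $m\times m$ matrix obtained by stacking $B$, $u^t$, $v^t$. Then:
\begin{itemize}
\item Replacing $B$ by $UBQ$ and $A$ by $AQ$, with $U,Q$ invertible over $R_P$, turns $K$ into $\det U\det Q\,Q^{-1}K(Q^t)^{-1}$. Hence $AK$ becomes a unit multiple of $AK(Q^t)^{-1}$.
\item Adding multiples of rows of $B$ to rows of $A$ leaves $AK$ unchanged, because $BK=0$. It also leaves $p_1,p_2,p_3$ and $I_m(\eta)$ unchanged.
\end{itemize}
Consequently $I_2(AK)_P$ can be computed in the normal form where $B=[\,\mathfrak{I}\ \boldsymbol0\,]$. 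There $K$ has only the two entries $\pm1$ in positions $(m-1,m)$ and $(m,m-1)$.

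Your insistence on tracking the signs so that $K$ is skew-symmetric is well placed. Read literally, $\sigma_{i,j}=(-1)^{i-j}$ is symmetric in $i,j$, which makes $K$ symmetric, and then $BK=0$ fails in general (e.g.\ for $m=3$). So the Buchsbaum--Rim sign convention, with the sign flipping across the diagonal, is the one to use.
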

\begin{proof} Since $\Ht I_{m-2}(B)=3,$  $\coker B$ is resolved by the Buchsbaum-Rim complex (see \cite[Corollary A2.13]{E})
	\begin{equation}\label{B-R-cokerB}
		0\to R^{m-1}\stackrel{B^t}\lar R^m\stackrel{K}\lar R^m\stackrel{B}\to R^{m-1}\to\coker B\to 0.
	\end{equation}
	Let $\{p_1,\ldots, p_{m+1}\}$ be the signed maximal minors of $\eta$, ordered as stated.
	As clearly,  $\left[\begin{matrix}p_4&\cdots&p_{m+1}\end{matrix}\right]B=-\left[\begin{matrix}p_1&p_2&p_3\end{matrix}\right]A$, one deduces from \eqref{B-R-cokerB} the following complex
	\begin{equation}
		0\to R^{m-2}\stackrel{B^t}\lar R^m\stackrel{AK}\lar R^3\stackrel{[p_1\,p_2\,p_3]}\lar R.
	\end{equation}
	Now, since $\Ht I_2(\eta)=2$ and $\Ht I_{m-2}(B)=3,$ it follows from \cite[Theorem D]{AnSi1986} that this complex
	is a free resolution of $J.$
	
	 Thus,  to conclude one has to verify the shifts.
	 Namely, it is enough to show that $\deg p_1=\deg p_2=\deg p_3$ and that the degree of the $j$th column of $AK$ is $\delta_j$ for every $1\leq j\leq m.$

First, note that
	\begin{equation}\label{format-p_i}
		p_i=\sum_{1\leq r<s\leq m} g_{r,s}^{\hat{i}}\Delta_{r,s} \quad (1\leq i\leq 3)
	\end{equation}
where $g_{r,s}^{\hat{i}}$  stands for the $2$-minor of $A$ omitting the $i$th row and choosing the $r$th and $s$th columns.  Since the $j$th column of $A$ is null if $d-\delta_j<0 $, then the nonzero summands in \eqref{format-p_i} are those such that $d-\delta_r\geq 0$ and $d-\delta_s\geq 0.$ For any such a summand, one has
	$$\deg g_{r,s}^{\hat{i}}=2d-\delta_r-\delta_s\quad \mbox{and}\quad \deg \Delta_{r,s}= \sum_{j=3}^{m}\delta_j-\sum_{1\leq j\leq m\atop j\neq r,s}\delta_j+\sum_{j=1}^{m-2} \epsilon_j ,$$
	that is,
	$$\deg(g_{r,s}^{\hat{i}}\Delta_{r,s})=2d-\delta_1-\delta_2+\sum_{j=1}^{m-2} \epsilon_j $$
	Thus, from  \eqref{sumHS-c1} it follows that
	$$\deg(g_{r,s}^{\hat{i}}\Delta_{r,s})=d.$$
	In conclusion,  $\deg p_1=\deg p_2=\deg p_3=d.$

	Now let $c_{i,j}$ be the $i,j$th entry of the matrix $AK,$ namely, 
		$$c_{i,j}=\sigma_{1,j}a_{i,1}\Delta_{1,j}+\cdots +\sigma_{j-1,j}a_{i,j-1}\Delta_{j-1,j}+\sigma_{j+1,j} a_{i,j+1}\Delta_{j+1,j}+\cdots +\sigma_{m,j}a_{i,m}\Delta_{m,j}.$$
		One needs to show that $\deg c_{i,j}:=\delta_j.$
	But since 
	$$\deg a_{i,u}=d-\delta_u\quad \mbox{and} \quad\deg \Delta_{u,j}= \displaystyle\sum_{l=3}^{m}\delta_l-\displaystyle\sum_{1\leq l\leq m\atop l\neq u,j}\delta_l+\displaystyle\sum_{l=1}^{m-2} \epsilon_l,$$ 
	\eqref{sumHS-c1} again implies that
	$\deg(a_{i,u}\Delta_{u,j})=\delta_j.$
	Thus, $\deg c_{i,j}=\delta_j.$
	\end{proof}

\subsection{Skew-symmetric matrices and an idea of Vasconcelos}

Let $\eta$ be a  $(d,m, \underline{\delta},\underline{\epsilon})$-level  matrix based on a set of latent data and let $J$ denote the ideal generated by the maximal minors of $\eta$ fixing the lower block matrix $B.$ By Theorem \ref{AS-implies-ideal-prescribed-shifts}, in order  that  $J$ provide an affirmative answer to Problem~ \ref{mainquestion} it suffices to verify that it has height $2$.  

An argument will be supplied here as based on a lemma about compound  matrices of skew-symmetric matrices of rank $2$, and  a result first guessed by W. Vasconcelos.

Let us proceed to the required details.

Let $M$ be an $m\times n$ matrix with entries in an arbitrary ring $R.$  For nonempty subsets $\alpha\subset\{1,\ldots,m\}$  and  $\beta\subset \{1,\ldots,n\}$,  $M(\alpha|\beta)$ denotes the submatrix of $M$ with rows (respectively, columns) indexed by $\alpha$ (respectively, $\beta$), lexicographically ordered. Let $p\leq \min\{m,n\}$ denote a positive integer.  The  {\em $p$-compound}   of the matrix $M$ is the ${m\choose p}\times {n\choose p}$ matrix $C_p(M)$ whose  entries are the (determinantal) minors $\det M(\alpha|\beta)$, for all choices of $\alpha\subset\{1,\ldots,m\}$ and $\beta\subset \{1,\ldots,n\}$ such that $\#\alpha=\#\beta=p$.

It is classically known that, as a consequence of the Cauchy-Binet formula's, one can infer that, for any $m\times n$ matrix $M$ and any $n\times l$ matrix $N$, one has the compound property
\begin{equation}\label{compound-property}
	C_p(MN)=C_p(M)C_p(N),
\end{equation}
for every $1\leq p\leq \min\{m,n,l\}.$

Skew-symmetric matrices of rank $\leq 2$ interact with the $2$-compound matrices, in the following sense.

\begin{Lemma}\label{compoundskwerank2}
	Let $M=(a_{i,j})$ be an $m\times m$ skew-symmetric matrix over an integral domain $R.$ Suppose that$:$ 
	\begin{enumerate}
	\item[{\rm (1)}] $M$ has rank $\leq 2.$
		\item[{\rm (2)}]  The entries off the main diagonal are nonzero.
\end{enumerate}
	Then, for every $1\leq i< j\leq m, $ the column of $C_{2}(M)$ determined by indices $\{i,j\}$ is the transpose of the following $1\times {m\choose 2}$ matrix
	$$\left[\begin{matrix}
		a_{i,j} a_{1,2}&\cdots &a_{i,j} a_{1,m}&a_{i,j} a_{2,3}&\cdots&a_{i,j} a_{2,m}&\cdots& a_{i,j} a_{m-1,m}
	\end{matrix}\right]$$
\end{Lemma}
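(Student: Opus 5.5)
The plan is to compute the entries of $C_2(M)$ directly and compare them with the claimed products, using one identity: the Pfaffian of a $4\times 4$ principal skew-symmetric submatrix vanishes. Fix $1\leq i<j\leq m$ and a row index $\{k,l\}$ with $k<l$. By definition, the entry of $C_2(M)$ in row $\{k,l\}$ and column $\{i,j\}$ is
$$\det M(\{k,l\}|\{i,j\})=a_{k,i}a_{l,j}-a_{k,j}a_{l,i}.$$
We must show this equals $a_{i,j}a_{k,l}$, since this is the $\{k,l\}$-th coordinate of the displayed row vector. We use skew-symmetry throughout: $a_{u,v}=-a_{v,u}$ and $a_{u,u}=0$. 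We split into cases according to $\#(\{k,l\}\cap\{i,j\})$.

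If $\{k,l\}=\{i,j\}$, the minor is $a_{i,i}a_{j,j}-a_{i,j}a_{j,i}=a_{i,j}^2=a_{i,j}a_{k,l}$. If the two sets share exactly one index, a direct check of the four subcases suffices. For instance, if $k=i$, the minor is $0\cdot a_{l,j}-a_{i,j}a_{l,i}=a_{i,j}a_{i,l}=a_{i,j}a_{k,l}$. The other subcases are identical up to signs: each time the vanishing diagonal entry kills one product, and skew-symmetry fixes the sign of the other. Neither of these cases uses the rank hypothesis.

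The main case is when $i,j,k,l$ are pairwise distinct. Rewriting with skew-symmetry, the minor becomes $a_{i,k}a_{j,l}-a_{i,l}a_{j,k}$. The claim is then equivalent to
$$a_{i,j}a_{k,l}-a_{i,k}a_{j,l}+a_{i,l}a_{j,k}=0,$$
which is, up to sign, the Pfaffian of the $4\times 4$ principal submatrix $N$ of $M$ on rows and columns $\{i,j,k,l\}$. Since $M$ has rank $\leq 2$, all $3\times 3$ minors of $M$ vanish, so $\det N=0$. The classical identity $\det N=\operatorname{Pf}(N)^2$ then gives $\operatorname{Pf}(N)^2=0$, and since $R$ is a domain, $\operatorname{Pf}(N)=0$. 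This yields the required equality.

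The only real obstacle is the sign bookkeeping. The Pfaffian must be written with the index ordering matching $k<l$, $i<j$. However, the vanishing of $\operatorname{Pf}(N)$ does not depend on the ordering of the four indices, since reordering only changes the sign. So it suffices to check once that the three-term expression above is $\pm\operatorname{Pf}(N)$. Hypothesis (2) is not needed for the identity itself; it only guarantees that the displayed columns are nonzero.
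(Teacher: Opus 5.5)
Your proof is correct. The cases where $\{k,l\}$ meets $\{i,j\}$ are handled as in the paper, by direct computation without the rank hypothesis. For four distinct indices, however, you rely on a different identity from the paper's.

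\textbf{The paper's route.} It splits the distinct-index case according to the position of $u<v$ (your $k<l$) relative to $i<j$. For each configuration it writes down a specific non-principal $3\times 3$ minor of $M$; for $j<u<v$ this minor uses rows $u,v,i$ and columns $u,i,j$. Up to sign, it expands as $a_{u,i}(\theta_{u,v}-a_{i,j}a_{u,v})$. Rank $\leq 2$ makes this minor vanish, and hypothesis (2) together with $R$ being a domain lets one cancel $a_{u,i}$. Each such minor is, up to sign, an off-diagonal entry times the Pfaffian of the principal $4\times 4$ submatrix on $\{i,j,u,v\}$. So the paper uses this linear relation where you use the quadratic one, $\det N=\mathrm{Pf}(N)^2$.

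\textbf{What each route buys.} Your route gives uniformity: one Pfaffian computation covers all six interleavings of $\{u,v\}$ with $\{i,j\}$. The paper's list of five cases in fact omits $i<u<v<j$, which is treated the same way. Your route also gives generality. The domain property is used only to pass from $\mathrm{Pf}(N)^2=0$ to $\mathrm{Pf}(N)=0$, so hypothesis (2) is indeed not needed for the conclusion. The paper's route stays within explicit $3\times 3$ expansions and avoids the Pfaffian identity, at the cost of the case analysis and the extra nonvanishing assumption on the entries.
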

\begin{proof} By definition, given indices $1\leq i<j\leq m,$ the entries of the column of $C_2(M)$ determined by $\{i,j\}$ are the $2$-minors of the $m\times 2$ submatrix of $M$ which is the transpose of the following matrix:
	$$N=\left[\begin{array}{ccccccccccc}
a_{1,i}&\cdots&a_{i-1,i}&0&-a_{i,i+1}&\cdots& -a_{i,j-1}&-a_{i,j}&-a_{i,j+1}&\cdots&-a_{i,m}\\ 
a_{1,j}&\cdots&a_{i-1,j}&a_{i,j}&a_{i+1,j}&\cdots& a_{j-1,j}&0&-a_{j,j+1}&\cdots&-a_{j,m}
	\end{array}\right].$$
	Given $1\leq u<v\leq m$, let $\theta_{u,v}$ denote the $2$-minor of $N$ with rows $u$ and $v$. One needs to show that $\theta_{u,v}=a_{i,j}a_{u,v}$ for every $1\leq u<v\leq m.$ 
	
	For the 2-minors of $N$ fixing the $i$th row one has:
	\begin{equation}
		\theta_{u,i}=\det\left[\begin{matrix}
			a_{u,i}&0\\
			a_{u,j}&a_{i,j}
		\end{matrix}\right]=a_{i,j}a_{u,i},\quad \mbox{for every $1\leq u\leq i,$}
	\end{equation}
	\begin{equation}
		\theta_{i,u}= \det\left[\begin{matrix}
			0&a_{i,u}\\
			a_{i,j}&\ast
		\end{matrix}\right]=a_{i,j}a_{i,u},\quad \mbox{for every $i+1\leq u\leq m$}.
	\end{equation}
	Similarly, for the $2$-minors of $N$ fixing the $j$th row of $N$ it obtains:
	\begin{equation}
		\theta_{u,i}=\det\left[\begin{matrix}
			\ast&-a_{i,j}\\
			a_{u,j}&0
		\end{matrix}\right]=a_{i,j}a_{u,j},\quad \mbox{for every $1\leq u\leq j-1,$}
	\end{equation}
	\begin{equation}
		\theta_{j,u}= \det\left[\begin{matrix}
			-a_{i,j}&a_{i,u}\\
			0&-a_{j,u}
		\end{matrix}\right]=a_{i,j}a_{j,u},\quad \mbox{for every $j+1\leq u\leq m$}.
	\end{equation}
	Thus, to wrap up the argument it remains to compute $\theta_{u,v}$ for $1\leq u<v\leq m$ when $\{u,v\}\subset\{1,\ldots, m\}\setminus\{i,j\},$ for which one now  analyzes every possible value of $(u,v)$, according to the following cases: $$u<v<i; \:u<i<v<j; \:  u<i<j<v; \: i<u<j<v; \: j<u<v.$$
	
	Consider, e.g., the first possibility $j<u<v.$ Since $\rank M\leq 2$,  the following $3$-minor of $M$ vanishes
	$$\det\left[\begin{matrix}
		0&a_{u,i}&a_{u,j}\\
		-a_{u,v}&a_{v,i}&a_{v,j}\\
		-a_{u,i}&0&a_{i,j}\\	
	\end{matrix}\right].$$
	That is,
	$a_{u,i}(\theta_{u,v}-a_{i,j}a_{u,v})=0.$
	By hypothesis, $R$ is a domain and $a_{u,i}\neq 0.$ Thus, $\theta_{u,v}=a_{i,j}a_{u,v}.$
	
	The argument for the other listed possibilities is similar, by considering instead the matrices
	\begin{eqnarray*}
	\left[\begin{matrix}
			0&a_{u,i}&a_{u,j}\\
			-a_{u,i}&0&a_{i,j}\\
			-a_{u,v}&-a_{i,v}&a_{v,j}
		\end{matrix}\right]&,&\,\,
		\left[\begin{matrix}
			0&a_{i,u}&a_{i,j}\\
			-a_{i,u}&0&a_{u,j}\\
			-a_{i,v}&-a_{u,v}&a_{v,j}
		\end{matrix}\right], \,\,\,\,
		\left[\begin{matrix}
			0&a_{i,u}&a_{i,j}\\
			-a_{i,u}&0&a_{u,j}\\
			-a_{i,v}&-a_{u,v}&-a_{j,v}
		\end{matrix}\right]\, \quad {\rm and}\\
		 && \left[\begin{matrix}
			0&a_{i,u}&a_{i,j,}\\
			-a_{i,u}&0&-a_{j,u}\\
			-a_{i,v}&-a_{u,v}&-a_{j,v}
		\end{matrix}\right],
			\end{eqnarray*}
	respectively, to conclude that $\theta_{u,v}=a_{i,j}a_{u,v}.$
\end{proof}

For the present purpose, the relevant example  of a skew-symmetric matrix satisfying the hypotheses of Lemma~\ref{compoundskwerank2} is as follows.

\begin{Example}\label{exampleK}\rm Let $R=k[x_1,\ldots,x_n]$ be a polynomial ring in $n\geq 3$ variables over a field $k$, and let $B$ denote an $(m-2)\times m$ matrix whose entries  are polynomials in the maximal  ideal $\fm=\langle x_1,\ldots,x_n\rangle$ of $R.$ Supposing that $\Ht I_{m-2}(B)\geq 3,$  the Buchsbaum-Rim complex is a free resolution of $\coker B$ as in \ref{B-R-cokerB}. Hence, the syzygy matrix of $\coker B$ is the  $m\times m$ skew-symmetric matrix $K$ as in \eqref{KassociatedtoB}, necessarily of rank $2$.
	On the other hand, the hypothesis that $\Ht I_{m-2}(B)\geq 3$ also implies that the Eagon-Northcott complex is a minimal  free resolution of $I_{m-2}(B)R_{\fm}$ over $R_{\fm}.$  Thus, for every $1\leq i<j\leq m,$ the $i,j$th entry of $K$ is nonzero.  
\end{Example}

A major role of such symmetric matrices of rank $2$ with no-nonzero entries off the main diagonal is visible in the following result inspired by an original idea of Vasconcelos.

\begin{Proposition}\label{pre-level} Let $R=k[x_1,\ldots,x_n]$ denote a polynomial ring in $n\geq 3$ variables over a field $k$.
	Assume given the following data$:$
	\begin{enumerate}
		\item[{\rm (1)}]  An $(m+1)\times m$ block matrix $\eta:=\left[\begin{array}{c} 
				A \\ 
				\hline 
				B \end{array}\right],$
			where $A$ and $B$ are $3\times m$  and $(m-2)\times m$  matrices, respectively, with entries in $R$, with the assumption that $\Ht I_{m-2}(B)=3$, and  the entries of $B$ are polynomials belonging to the maximal  ideal $\fm=\langle x_1,\ldots,x_n\rangle$ of $R.$
		\item[{\rm (2)}]  The $m\times m$ skew-symmetric matrix $K$ in \eqref{KassociatedtoB}, with the $\Delta_{i,j}$ standing for the maximal minors of $B$.
	\end{enumerate} 
		Letting $p_1,\ldots,p_{m+1}$ denote the signed maximal minors of $\eta$  ordered in such a way that $p_1,p_2,p_3$ are those fixing the submatrix $B$, one has$:$
	\begin{enumerate}
		\item[\rm(a)] $I_2(AK)=\langle p_1,p_2,p_3\rangle I_{m-2}(B)$
		\item[\rm(b)] $I_{m-2}(B)\subset \langle p_1,p_2,p_3\rangle: I_m(\eta).$
		\item[\rm(c)] The following are equivalent$:$
		\begin{enumerate}
			\item[\rm(i)] $I_2(AK)\subset R$ has height at least two.
			\item[\rm(ii)] $\langle p_1,p_2,p_3\rangle$ has height two.
			\item[\rm(iii)] $I_{m}(\eta)\subset R$ has height two.
		\end{enumerate}
	\end{enumerate}
\end{Proposition}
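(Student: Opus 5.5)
The plan is to prove (a) with the compound property \eqref{compound-property} and Lemma~\ref{compoundskwerank2}, prove (b) with a Cramer-rule argument, and get (c) from (a), (b) and the Eagon--Northcott height bound.

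\emph{Part (a).} By Example~\ref{exampleK}, the hypothesis $\Ht I_{m-2}(B)=3$ with entries of $B$ in $\fm$ implies two things. First, $K$ is the skew-symmetric syzygy matrix in the Buchsbaum--Rim resolution \eqref{B-R-cokerB} of $\coker B$, so it has rank $2$. Second, all its off-diagonal entries $\sigma_{i,j}\Delta_{i,j}$ are nonzero. Hence Lemma~\ref{compoundskwerank2} applies to $K$ over the domain $R$. By \eqref{compound-property}, $C_2(AK)=C_2(A)\,C_2(K)$. By Lemma~\ref{compoundskwerank2}, the column of $C_2(K)$ indexed by $\{i,j\}$ is $\sigma_{i,j}\Delta_{i,j}$ times the vector $(\sigma_{u,v}\Delta_{u,v})_{u<v}$. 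So the entry of $C_2(AK)$ in the row indexed by the two rows of $A$ other than the $l$th, and in column $\{i,j\}$, equals
\[
\sigma_{i,j}\Delta_{i,j}\sum_{u<v} g^{\hat l}_{u,v}\,\sigma_{u,v}\Delta_{u,v}.
\]
This sum is the generalized Laplace expansion of the maximal minor of $\eta$ that omits the $l$th row of $A$, expanded along the two remaining rows of $A$; compare \eqref{format-p_i}. It therefore equals $\pm p_l$. Hence the $2$-minors of $AK$ are exactly the products $\pm p_l\Delta_{i,j}$, which gives $I_2(AK)=\langle p_1,p_2,p_3\rangle I_{m-2}(B)$. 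The main obstacle is bookkeeping the signs $\sigma_{u,v}$ against the Laplace signs. Since only the generated ideal matters, it suffices to check that the signs are uniform in $(u,v)$ up to a global factor depending on $l$. I would verify this by comparing with the formula for $p_l$.

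\emph{Part (b).} Set $q=[p_4\ \cdots\ p_{m+1}]$. As noted in the proof of Proposition~\ref{AS-implies-ideal-prescribed-shifts}, $qB=-[p_1\,p_2\,p_3]A$, and the right-hand side has entries in $J:=\langle p_1,p_2,p_3\rangle$. Fix $i<j$ and let $B'$ be the square submatrix of $B$ omitting columns $i,j$, so $\det B'=\pm\Delta_{i,j}$. Restricting the relation to those columns gives $qB'=w'$ with $w'\in J^{m-2}$. Multiplying on the right by the adjugate of $B'$ yields $\Delta_{i,j}\,q=\pm w'\,\mathrm{adj}(B')\in J^{m-2}$. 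Thus $\Delta_{i,j}p_l\in J$ for every $l\geq 4$. Trivially $\Delta_{i,j}p_l\in J$ also for $l\le 3$, so $I_{m-2}(B)I_m(\eta)\subset J$.

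\emph{Part (c).} The ideal $I_m(\eta)$ lies in $\fm$ because the entries of $B$ do, and it is the ideal of maximal minors of an $(m+1)\times m$ matrix. So it has height $\le 2$ by the Eagon--Northcott bound, and $J\subset I_m(\eta)$ then gives $\Ht J\le 2$.

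For (i)$\Rightarrow$(ii): by (a), $I_2(AK)\subset J$, so $\Ht J\ge 2$, and hence $\Ht J=2$.

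For (ii)$\Rightarrow$(iii): from $J\subset I_m(\eta)$ and the upper bound, $\Ht I_m(\eta)=2$.

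For (iii)$\Rightarrow$(i): by (b), any prime containing $J$ contains either $I_m(\eta)$, of height $2$, or $I_{m-2}(B)$, of height $3$. Hence $\Ht J\ge 2$. Then by (a), any prime containing $I_2(AK)=J\,I_{m-2}(B)$ contains $J$ or $I_{m-2}(B)$, so $\Ht I_2(AK)\ge 2$.
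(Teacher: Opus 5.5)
Your proof of (a), (b) and (iii)$\Rightarrow$(i) follows the paper's argument. For (a) you use $C_2(AK)=C_2(A)C_2(K)$ together with Lemma~\ref{compoundskwerank2}. For (b) you apply the adjugate trick to $[p_4\,\cdots\,p_{m+1}]B=-[p_1\,p_2\,p_3]A$. For (iii)$\Rightarrow$(i), your prime-by-prime reasoning just unpacks the paper's containment $I_m(\eta)I_{m-2}(B)^2\subset I_2(AK)$. Your sign bookkeeping in (a) is more careful than the paper's, and it works out: the Laplace sign $(-1)^{1+2+u+v}$ equals $-\sigma_{u,v}$ for every $u<v$. So each sum is $\pm p_l$ with a sign independent of $(u,v)$.

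The step that fails is the upper bound $\Ht\langle p_1,p_2,p_3\rangle\le 2$. The paper does not address it at all: it calls (i)$\Rightarrow$(ii) ``a consequence of (a)'', which only gives $\ge 2$. You rightly see that the bound is needed, but your justification, $I_m(\eta)\subset\fm$ ``because the entries of $B$ do'', holds only for $m\ge 4$. A maximal minor of $\eta$ omits a single row, so it involves at least $m-3$ rows of $B$; for $m=3$ the minor $\pm\det A$ involves none.

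This cannot be patched, because for $m=3$ the implication (i)$\Rightarrow$(ii) is false. Take $A$ the $3\times 3$ identity and $B=[x_1\ x_2\ x_3]$, so that all hypotheses hold. Then $\langle p_1,p_2,p_3\rangle=\langle x_1,x_2,x_3\rangle$ has height $3$, $I_3(\eta)=R$, and $I_2(AK)=\langle x_1,x_2,x_3\rangle^2$ has height $3$. So (i) holds while (ii) and (iii) fail.

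Your argument therefore proves the statement exactly under the extra hypothesis $I_m(\eta)\neq R$. That is all the Eagon--Northcott bound requires, and it is automatic if $m\ge 4$ or $\det A\in\fm$. The implication (ii)$\Rightarrow$(iii) survives even for $m=3$ without this hypothesis: if $I_m(\eta)=R$, then (b) gives $I_{m-2}(B)\subset\langle p_1,p_2,p_3\rangle$, forcing height $\ge 3$.
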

\begin{proof} (a) As met previously, for every $1\leq j\leq 3$ and $1\leq u<v\leq m,$  let $h_{u,v}^{\hat{j}}$ stand for the $2$-minor of $A$ omitting the $i$th row and fixing the columns $u$  and $v.$ Expanding $p_{j}$ along of the rows of $B$ it obtains 
	\begin{equation}\label{expressaop_j}
		\sum_{1\leq u<v\leq m} h_{u,v}^{\hat{j}}\Delta_{u,v}	= p_j,\quad 1\leq j\leq 3.
	\end{equation} 
	Note that $$C_2(A)=\left[\begin{array}{ccccccccc} h_{1,2}^{\hat{3}}&h^{\hat{3}}_{1,3}&\cdots&h^{\hat{3}}_{1,m}&h^{\hat{3}}_{2,3}&\cdots& h^{\hat{3}}_{2,m}&\cdots& h^{\hat{3}}_{m-1,m}\\ [3pt]
		h_{1,2}^{\hat{2}}&h_{1,3}^{\hat{2}}&\cdots&h^{\hat{2}}_{1,m}&h^{\hat{2}}_{2,3}&\cdots& h^{\hat{2}}_{2,m}&\cdots& h^{\hat{2}}_{m-1,m}\\ [3pt]
		h_{1,2}^{\hat{1}}&h_{1,3}^{\hat{1}}&\cdots&h^{\hat{1}}_{1,m}&h^{\hat{1}}_{2,3}&\cdots& h^{\hat{1}}_{2,m}&\cdots& h^{\hat{1}}_{m-1,m}
	\end{array}\right].$$ 
	On the other hand,  apply Lemma~\ref{compoundskwerank2} with $M=K$, along  with the observation in Example~\ref{exampleK} afforded by the assumption in datum (1) to the effect that $\Ht I_{m-2}(B)=3$.
	It entails:
	\begin{equation*}
		C_2(K)=\left[\begin{matrix}\Delta_{1,2}\Delta_{1,2}&\cdots&\Delta_{i,j}\Delta_{12}&\cdots&\Delta_{m-1,m}\Delta_{1,2}\\
			\vdots&&\vdots&&\vdots\\
			\Delta_{1,2}\Delta_{ij}&\cdots&\Delta_{i,j}\Delta_{i,j}&\cdots&\Delta_{m-1,m}\Delta_{i,j}\\
			\vdots&&\vdots&&\vdots\\
			\Delta_{1,2}\Delta_{m-1,m}&\cdots&\Delta_{i,j}\Delta_{m-1,m}&\cdots&\Delta_{m-1,m}\Delta_{m-1,m}
		\end{matrix}\right].
	\end{equation*}
	Thus,

	{\small \begin{eqnarray*}
			C_2(A)C_2(K)
			=\left[\begin{array}{ccccc}
				\Delta_{1,2}\displaystyle\sum_{u,v} h_{u,v}^{\hat{3}}\Delta_{u,v}&\cdots&\Delta_{i,j}\displaystyle\sum_{u,v} h_{u,v}^{\hat{3}}\Delta_{u,v}&\cdots&\Delta_{m-1,m}\displaystyle\sum_{u,v} h_{u,v}^{\hat{3}}\Delta_{u,v}\\ [3pt]
				\Delta_{1,2}\displaystyle\sum_{u,v} h_{u,v}^{\hat{2}}\Delta_{u,v}&\cdots&\Delta_{i,j}\displaystyle\sum_{u,v} h_{u,v}^{\hat{2}}\Delta_{u,v}&\cdots&\Delta_{m-1,m}\displaystyle\sum_{u,v} h_{u,v}^{\hat{2}}\Delta_{u,v}\\ [3pt]
				\Delta_{1,2}\displaystyle\sum_{u,v} h_{u,v}^{\hat{1}}\Delta_{u,v}&\cdots&\Delta_{i,j}\displaystyle\sum_{u,v} h_{u,v}^{\hat{1}}\Delta_{u,v}&\cdots&\Delta_{m-1,m}\displaystyle\sum_{u,v} h_{u,v}^{\hat{1}}\Delta_{u,v}
			\end{array}\right].
	\end{eqnarray*}}
	Now, from \eqref{expressaop_j},
	$$C_2(A)C_2(K)=\left[\begin{matrix}
		\Delta_{1,2}\,p_3&\cdots&\Delta_{i,j}\,p_3&\cdots&\Delta_{m-1,m}\,p_3\\
		\Delta_{1,2}\,p_2&\cdots&\Delta_{i,j}\,p_2&\cdots&\Delta_{m-1,m}\,p_2\\
		\Delta_{1,2}\,p_1&\cdots&\Delta_{i,j}\,p_1&\cdots&\Delta_{m-1,m}\,p_1
	\end{matrix}\right].$$
	With this and the fact that  $C_2(AK)=C_{2}(A)C_2(K)$ we conclude that  $I_2(AK)=I_1(C_2(AK))=\langle p_1,p_2,p_3\rangle I_{m-2}(B)$ as claimed.
	
	(b) It is enough to show that $$ I_{m-2}(B) \langle p_4,\ldots,p_{m+1}\rangle\subset \langle p_1,p_2,p_3\rangle.$$ 
	For this, note that, since $[p_1\,\,\cdots\,\, p_{m+1}]\,\eta=\boldsymbol0$, then  
	$$[p_1\,p_2\,p_3]A=- [p_{4}\,\,\cdots\,\,p_{m+1}]B.$$ Thus,
	$I_1([p_{4}\,\,\cdots\,\,p_{m+1}]B)\subset\langle p_1,p_2,p_3\rangle.$ In particular, for an arbitrary $(m-2)\times (m-2)$ submatrix  $\widetilde{B}$ of $B$, one has $I_1([p_{4}\,\,\cdots\,\,p_{m+1}]\widetilde{B})\subset \langle p_1,p_2,p_3\rangle.$ Thus, $$\det \widetilde{B}\langle p_4,\ldots,p_{m+1}\rangle =I_1(\det\widetilde{B}[p_4\ \,\cdots\, p_{m+1}])=I_1([p_4\,\cdots\,p_{m+1}]\widetilde{B}\, {\rm adj}\widetilde{B}))\subset\langle p_1,p_2,p_3\rangle,$$
	where ${\rm adj}\widetilde{B}$ denotes the adjugate matrix of $\widetilde{B}$.
	
	Hence, $\det\widetilde{B}\in \langle p_1,p_2,p_3\rangle:I_{m}(\eta).$ Therefore,
$$ I_{m-2}(B) \langle p_4,\ldots,p_{m+1}\rangle\subset \langle p_1,p_2,p_3\rangle,$$
	as desired.

	(c) The implication (i)$\Rightarrow$(ii) is a consequence of (a), while  (ii)$\Rightarrow$(iii) follows from the fact that $\langle p_1,p_2,p_3\rangle$ is a subideal of $I_{m}(\eta)$. Finally, to prove the implication (iii)$\Rightarrow$(i) note by the items (a) and (b)  that  $I_m(\eta)I_{m-2}(B)^2\subset I_2(AK).$ So, if   $\Ht I_m(\eta)\geq 2$  then the height of $I_2(AK)$ is also at least $2$. 
\end{proof}

\subsection{Proof of the main theorem}

Namely, as stated in Theorem~\ref{mainthm}, our main result  shows that the ideal of maximal minors fixing a submatrix of a level matrix answers Problem~\ref{mainquestion} affirmatively, and that, in addition, every ideal of height $2$ generated by three forms  of degree $d\geq 2$ with resolution as in \eqref{res-prelim-bis} is necessarily of this form.

For the reader's convenience, we recall the statement of Theorem~\ref{mainthm}.

 \newtheorem*{Thm}{Theorem}
 \begin{Thm}  Let $J\subset R$ be an ideal.  The following conditions are equivalent$:$
	\begin{itemize}
		\item[{\rm (i)}]  $J$ is an almost Cohen--Macaulay codimension $2$ ideal generated by three forms of the same degree $d\geq 2$.
		\item[{\rm (ii)}] There exist  latent data $(d,m, \underline{\delta},\underline{\epsilon})$, and a $(d,m, \underline{\delta},\underline{\epsilon})$-level 2-block matrix $\eta$ such that $J$ is generated by the maximal minors of $\eta$ fixing its lower block consisting of $m-2$ rows.
	\end{itemize}
\end{Thm}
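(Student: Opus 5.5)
The plan is to prove the two implications separately, using Proposition~\ref{AS-implies-ideal-prescribed-shifts} and Proposition~\ref{pre-level} for (ii)$\Rightarrow$(i), and the shape of the minimal resolution together with the Buchsbaum--Rim complex for (i)$\Rightarrow$(ii).

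\emph{(ii)$\Rightarrow$(i).} Let $\eta$ be $(d,m,\underline{\delta},\underline{\epsilon})$-level and $J=\langle p_1,p_2,p_3\rangle$.
\begin{itemize}
\item Since $\Ht I_m(\eta)=2$, Proposition~\ref{pre-level}(c), (iii)$\Rightarrow$(ii), gives $\Ht J=2$.
\item Proposition~\ref{AS-implies-ideal-prescribed-shifts} gives a graded minimal free resolution of $J$ of length $2$, with three generators of degree $d$. Hence ${\rm pd}\,R/J=3=\Ht J+1$, so $J$ is not perfect but is almost Cohen--Macaulay.
\item It remains to rule out $d=1$. Then (iv) forces $\delta_1=\delta_2=\delta_3=1$, and (ii) gives $m=3$. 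But three linear forms generating a height $2$ ideal span a complete intersection, which is perfect, a contradiction. So $d\geq 2$.
\end{itemize}

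\emph{(i)$\Rightarrow$(ii).} Write $J=\langle f_1,f_2,f_3\rangle$ with minimal resolution \eqref{res-prelim-bis}. By Section~2, namely \eqref{bigsumHS}, \eqref{sumHS}, Proposition~\ref{cod2_3gens} and Theorem~\ref{D-Sextended}, the shifts form latent data $(d,m,\underline{\delta},\underline{\epsilon})$.
\begin{itemize}
\item Set $B:=\psi^t$, an $(m-2)\times m$ matrix. By minimality its entries lie in $\fm$, and they have the degrees prescribed in Definition~\ref{def_level}.
\item Since the resolution is exact of length $3$ and $\psi$ has rank $m-2$, the Buchsbaum--Eisenbud acyclicity criterion gives $\Ht I_{m-2}(B)\geq 3$.
\item Consequently the Buchsbaum--Rim complex \eqref{B-R-cokerB} is exact, so $\ker B={\rm im}\,K$, with $K$ as in \eqref{KassociatedtoB}.
\item From $\varphi\psi=0$ we get $B\varphi^t=0$. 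Hence $\varphi^t=KX$ for some $m\times 3$ matrix $X$, which can be chosen graded since everything is homogeneous.
\item Put $A:=-X^t$. Skew-symmetry of $K$ gives $\varphi=AK$, and a degree count identical to the one in Proposition~\ref{AS-implies-ideal-prescribed-shifts} shows $a_{i,j}$ has degree $d-\delta_j$; columns with $d-\delta_j<0$ are zero.
\end{itemize}
Let $\eta=\left[\begin{smallmatrix}A\\ B\end{smallmatrix}\right]$, and let $p_1,p_2,p_3$ be its maximal minors fixing $B$.

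\emph{Identifying $J$ with $\langle p_1,p_2,p_3\rangle$.} This is the step I expect to be the main obstacle. I would argue as follows.
\begin{itemize}
\item The computation in Proposition~\ref{pre-level}(a) shows that the vector of $2$-minors of columns $u,v$ of $AK=\varphi$ equals $\Delta_{u,v}\,(p_3,p_2,p_1)$, up to signs.
\item On the other hand, any two columns of $\varphi$ are syzygies of $(f_1,f_2,f_3)$. Because $\Ht J\ge 2$, so the $f_i$ have no common factor, the vector of their $2$-minors is $c_{u,v}(f_1,f_2,f_3)$ for some $c_{u,v}\in R$. This follows from the rank-one relation, exactly as in the Claim in the proof of Theorem~\ref{D-Sextended}.
\item Hence $\Delta_{u,v}\,p=c_{u,v}\,f$ for all $u<v$. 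Since the $\Delta_{u,v}$ have no common factor (height $\geq3$) and neither do the $f_i$, unique factorization gives $p=\lambda f$ for some form $\lambda$.
\item Proposition~\ref{AS-implies-ideal-prescribed-shifts} shows $\deg p_i=d$, so $\lambda\in k$. Moreover $\lambda\neq0$, because $\varphi$ has rank $2$, so $I_2(\varphi)\neq0$. Thus $J=\langle p_1,p_2,p_3\rangle$.
\end{itemize}
Finally, Proposition~\ref{pre-level}(c), (ii)$\Rightarrow$(iii), gives $\Ht I_m(\eta)=2$. So $\eta$ is level and (ii) holds.
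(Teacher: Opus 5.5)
Your proposal is correct and follows the paper's proof. It uses the same decomposition, the same construction of $B=\psi^t$, $K$ and $\varphi=AK$, and the same identification of $J$ with $\langle p_1,p_2,p_3\rangle$ by a rank-$2$ proportionality argument that degrees force to be a nonzero constant. The differences are only of detail. You obtain the proportionality from the compound identity of Proposition~\ref{pre-level}(a) and deduce levelness afterwards via (c)(ii)$\Rightarrow$(iii); the paper instead gets levelness first from $\Ht I_2(\varphi)\geq 2$ via (c)(i)$\Rightarrow$(iii) and then uses $[p_1\,p_2\,p_3]\varphi=0$ with $\rank\varphi=2$. You also make explicit the checks that $d\geq 2$ in (ii)$\Rightarrow$(i) and that $[p_1\,p_2\,p_3]\neq 0$, which the paper leaves unjustified.
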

\begin{proof}  (ii) $\Rightarrow$ (i) This implication is a consequence of Proposition~\ref{AS-implies-ideal-prescribed-shifts} and Proposition~\ref{pre-level}.

	(i) $\Rightarrow$ (ii)
The minimal graded free resolution of $J$ is as in \eqref{res-prelim-bis}, and one sticks to the outcoming latent data $(d,m,\underline{\delta},\underline{\epsilon})$ afforded by \eqref{crescente}, \eqref{sumHS-c1}, \eqref{c2} and \eqref{c3}.
	
Thinking of $\psi$ as a matrix, set
$B:=\psi^t$.  The Buchsbaum--Eisenbud acyclicity  criterion implies that ${\rm ht\,}I_{m-2}(B)=3.$ Thus, as in the proof of Theorem~\ref{AS-implies-ideal-prescribed-shifts}, the Buchsbaum-Rim complex of $B$ is a minimal free resolution of $\coker B$, with syzygy matrix 
	$$K=\left[\begin{matrix}
		0&\sigma_{1,1}\Delta_{1,2}&\cdots&\sigma_{1,m}\Delta_{1,m}\\
		\sigma_{1,2}\Delta_{2,1}&0&\cdots&\sigma_{2,m}\Delta_{2,m}\\
		\vdots&\vdots&\ddots&\vdots\\
		\sigma_{m,1}\Delta_{m,1}&\sigma_{m,2} \Delta_{m,2}&\cdots&0
	\end{matrix}\right]$$
	as in \eqref{KassociatedtoB}. Then, dualizing \eqref{res-prelim-bis} yields $$\phi=AK,$$ for a certain
	$3\times m$ matrix $A$ such that, for every $1\leq i\leq m,$ the entries of the $i$th column are equal to zero if $d<\delta_i $, and  a homogeneous polynomial of degree $d-\delta_i$, otherwise.
	
	One now claims that the following $(m+1)\times m$ matrix
	\begin{equation}
		\eta=\left[\begin{matrix}
			A\\
			B
		\end{matrix}\right]
	\end{equation}
	satisfies the requirement in (i), namely, that
	 $J$ is generated by the three minors $p_1,p_2,p_3$ of $\eta$ fixing the submatrix $B$  of $\eta$.
	
	Note that $p_1,p_2,p_3$ are of degree $d.$  Moreover, by the Buchsbaum--Eisenbud acyclicity  criterion,  ${\rm ht\,}I_2(\phi)\geq 2.$ Thus, since $\phi=AK,$  Proposition~\ref{pre-level}(c) implies that  $\Ht I_m(\eta)=2.$ In particular, $\eta$ is a $(d,m,\underline{\delta},\underline{\epsilon})$-level matrix.   Hence, by Theorem~\ref{AS-implies-ideal-prescribed-shifts},  
	$$[p_1\,\,p_2\,\,p_3]\phi=\boldsymbol0.$$
	Say, $J=\langle f_1,f_2,f_3\rangle$.
	As $\rank \phi=2$, then $[p_1\,\,p_2\,\,p_3]$  and $[f_1\,\,f_2\,\,f_3]$ are multiples of each other in the fraction field of $R.$ But,   $[p_1\,\,p_2\,\,p_3]\neq \boldsymbol0$.  Therefore, there are nonzero elements $p,f$ of $R$  with gcd$(p,f)=1$ such that $$fp_i=pf_i,\quad 1\leq i\leq 3.$$
	Since $\deg p_i=\deg f_i=d$ for every $1\leq i\leq 3$, and $J$ has height $2$, forcibly  $p$ and $f$  are nonzero elements of $k.$ Therefore, $J=\langle p_1,p_2,p_3\rangle.$	
\end{proof}

\section{Theory guiding examples}

The examples in this section have the purpose to illustrate non-trivially the content of the main results, by gathering the precise format of the involved matrices and the shape of the related free resolutions.
One goal here is to illustrate how the search for an appropriate level matrix is often subtle.

\subsection{Non-geometric example}

The example below aims at explaining  a natural choice of a level matrix when the given ideal $J$ is itself the ideal of maximal minors fixing a submatrix.

Assume given:

$\bullet$ Latent data $(d,m, \underline{\delta},\underline{\epsilon})$  satisfying the following additional condition:  for some $1\leq u\leq m-2,$ $\delta_u=\delta_{u+1}=\delta_{u+2}=d.$

$\bullet$ 
An $(m-2)\times m$ matrix $B=(b_{i,j})$ with entries in a standard polynomial ring $R$ over a field, satisfying the condition that the $b_{i,j}$ is a homogeneous polynomial of degree $\delta_{i+2}-\delta_j+\epsilon_i$ if $\delta_{i+2}-\delta_j+\epsilon_i>0$ and $b_{i,j}=0$ if $\delta_{i+2}-\delta_j+\epsilon_i\leq 0.$

$\bullet$
	The $(m-2)\times (m-3)$ submatrix $B'$ of $B$ obtained by omitting the columns in positions $u,u+1, u+2$. 

The following proposition gives extra precision  to the content of \cite{AnSi1986}, by throwing additional light based on the  present considerations.

\begin{Proposition}\label{mainII} With the above data and notation, let $J$ be the ideal generated by the three $(m-2)$-minors of $B$ fixing the  submatrix $B'$. 
	Assume that$:$
	\begin{enumerate}
		\item[\rm(i)]
	$\Ht I_{m-2}(B)\geq 3$ and $\Ht I_{m-3}(B')\geq 2.$  
	\item[\rm(ii)] $K$ is the syzygy matrix of $\coker B$ as in \eqref{KassociatedtoB}.
		\end{enumerate}
	Then$:$
		\begin{enumerate}
			\item[\rm(a)] The minimal graded free resolution of $J$ is
			$$0\to\bigoplus  R(-d-\delta_{i+2}-\epsilon_i)\to R(-2d)^3\oplus\sum_{j\neq u,u+1,u+2} R(-d-\delta_j)^{m-3}\to  R(-d)^{3}\to R.$$
			\item[\rm(b)] The syzygy matrix of $J$ is the $3\times m$ submatrix of $K$ of  rows $u,\,u+1$ and $u+2$.
		\end{enumerate}
\end{Proposition}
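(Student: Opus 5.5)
The plan is to realize $J$ as the ideal of maximal minors fixing the lower block of a level matrix, and then apply Proposition~\ref{AS-implies-ideal-prescribed-shifts} together with Proposition~\ref{pre-level}. Take $A$ to be the $3\times m$ matrix whose columns $u,u+1,u+2$ form the $3\times 3$ identity matrix and whose other columns are zero, and set $\eta=\left[\begin{array}{c}A\\ \hline B\end{array}\right]$. Because $\delta_u=\delta_{u+1}=\delta_{u+2}=d$, the unit entries have the prescribed degree $d-\delta_j=0$. The zeros in columns with $\delta_j<d$ are allowed by the degree convention, read with zero polynomials permitted.

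Expanding the maximal minor of $\eta$ that fixes $B$ and omits row $i$ of $A$, via the formula \eqref{format-p_i}, gives $p_i=\pm\Delta_{r,s}$, where $\{r,s\}=\{u,u+1,u+2\}\setminus\{u+i-1\}$. Each $\Delta_{r,s}$ is the $(m-2)$-minor of $B$ on $B'$ together with one of the three omitted columns. So $\langle p_1,p_2,p_3\rangle=J$. This also matches the degrees: $\deg p_i=d$, as in the proof of Proposition~\ref{AS-implies-ideal-prescribed-shifts}.

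Next I would verify the level conditions. First, $\Ht I_{m-2}(B)\geq 3$ holds by hypothesis, and equality can be assumed since $B$ has $m-2$ rows and $m$ columns. Second, every maximal minor of $\eta$ is, up to sign, a maximal minor of the submatrix of $\eta$ obtained by deleting the identity columns' rows appropriately. Concretely, $I_m(\eta)$ contains $J$, and it contains the minors that use all three identity rows. Those equal $\pm$ the $(m-3)$-minors of $B'$ times the corresponding entries, so $I_m(\eta)=J+I_{m-3}(B')\cdot(\ldots)$. The cleanest route is to avoid computing $I_m(\eta)$ exactly and instead show $\Ht J\geq 2$ directly, then use Proposition~\ref{pre-level}(c), (ii)$\Rightarrow$(iii). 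The height of $J$ is the key point. $J$ is the ideal of maximal minors of $B$ fixing $B'$, and by \cite{AnSi1986} (or the standard argument) $J$ contains $I_{m-3}(B')\,\cdot$ (the entries of the three extra columns reduced modulo $B'$). So I would argue that a prime $P\supseteq J$ of height $\leq 1$ must contain either $I_{m-3}(B')$, which has height $\geq 2$, or else force $B$ to drop rank at $P$, which contradicts $\Ht I_{m-2}(B)\geq 3$. Localizing at $P\not\supseteq I_{m-3}(B')$ makes $B'$ contain a unit minor; after row and column operations, $J_P$ becomes the ideal of the entries of a $1\times 3$ residual row, hence $J_P=I_{m-2}(B)_P$, and this has height $\geq 3$. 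This step is the expected obstacle, and it is where both height hypotheses in (i) are used.

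With $\eta$ established as level, Proposition~\ref{AS-implies-ideal-prescribed-shifts} yields the resolution $0\to\bigoplus R(-d-\delta_{j+2}-\epsilon_j)\xrightarrow{B^t}\bigoplus R(-d-\delta_i)\xrightarrow{AK}R(-d)^3\to R$. Since $\delta_j=d$ for $j=u,u+1,u+2$, the middle module is $R(-2d)^3\oplus\bigoplus_{j\neq u,u+1,u+2}R(-d-\delta_j)$, which gives (a). For (b), $AK$ is exactly the submatrix of $K$ on rows $u,u+1,u+2$ by the choice of $A$, up to ordering and signs of the generators $p_i$.
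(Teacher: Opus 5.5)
Your proof is correct and is essentially the paper's: you use the same $\eta$ (identity block in columns $u,u+1,u+2$ of $A$, zeros elsewhere), identify $\langle p_1,p_2,p_3\rangle$ with $J$, and then get (a) from Proposition~\ref{AS-implies-ideal-prescribed-shifts} and (b) from $AK$ being rows $u,u+1,u+2$ of $K$. The only divergence is the height check. The maximal minors of $\eta$ that keep the three identity rows are exactly $\pm$ the maximal minors of $B'$, with no extra factor, so $I_m(\eta)=J+I_{m-3}(B')$ and $\Ht I_m(\eta)\geq 2$ is immediate; this is how the paper argues. Your localization argument, showing $V(J)\subseteq V(I_{m-3}(B'))\cup V(I_{m-2}(B))$, is also valid and even yields $\Ht J\geq 2$ directly, but the step you flagged as the main obstacle is not actually one.
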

\begin{proof} With no loss of generality, assume that $u=1.$ The proof for $1\leq u\leq m-2$ arbitrary is entirely similar.
	
	(a) Let $\eta$  be the following $(m+1)\times m$ matrix 
	$$\eta=
	\left[\begin{array}{c}
		\begin{matrix}\mathfrak{I}&\boldsymbol0\end{matrix}\\
		\hline
		B
	\end{array}\right]$$
	where $\mathfrak{I}$ is the $3\times 3$ identity matrix and $\boldsymbol0$ is the $3\times (m-3)$ null matrix. Note that  the data $(d,m,\underline{\delta},\underline{\epsilon})$ satisfy conditions \eqref{sumHS}, \eqref{c2} and \eqref{c3},  and the entries  are as in Definition~\ref{def_level} with respect to $(d,m,\underline{\delta},\underline{\epsilon}).$ 
	
	{\bf Claim.}  $\eta$ is a $(d,m,\underline{\delta},\underline{\epsilon})$-level matrix. 
	
	By hypothesis, $\Ht I_{m-2}(B)\geq 3.$ Thus, it remains to show that $\Ht I_{m}(\eta)=2.$ 
	But, by the format of $\eta$, the subideal of $I_m(\eta)$ generated by the maximal minors of $\eta$ fixing 	$\left[\begin{matrix}\mathfrak{I}&\boldsymbol0\end{matrix}\right]$ is exactly $I_{m-3}(B').$ Since $\Ht I_{m-3}(B')\geq 2$, then $\Ht I_{m}(\eta)\geq 2.$ 
	
	Finally, by the shape of $\eta$, $J$ is the ideal generated by the maximal minors of the $(d,m,\underline{\delta},\underline{\epsilon})$-level matrix fixing the submatrix $B.$ Hence,  Proposition~\ref{AS-implies-ideal-prescribed-shifts} wraps up the matter.
	
	(b) This is because, according to the  Proposition~\ref{AS-implies-ideal-prescribed-shifts}, the syzygy matrix of $J$ is the product 	$\left[\begin{matrix}\mathfrak{I}&\boldsymbol0\end{matrix}\right]K.$
\end{proof}

\subsection{Geometric examples}

\subsubsection{Plane curves whose Jacobian ideal admits only three generating syzygies}
	As a move toward understanding the watershed between arbitrary forms and partial derivatives of a form, one may ask whether there exists some analogue of Theorem~\ref{mainthm} in the case where $J$ is the Jacobian ideal of a form  $f \in k[x,y,z]_{d+1}$ which implies a known class of plane curves.
	As a step toward understanding this question, note that by (\cite[Theorem~2.6]{Dimca-Sticlaru2018}), drawing upon Theorem~\ref{mainthm} (i) $\Rightarrow$ (ii), we know that the Jacobian ideal of a nearly free plane curve $f \in k[x,y,z]_{d+1}$ turns out to be generated by the maximal minors of a level matrix fixing the last row.
Alas, the converse does not hold in general. 

\begin{Example}\rm The reduced plane curve defined by
$$f : = x\bigl(xy(x+y) + z^{3}\bigr),$$
is not nearly free, and yet its Jacobian ideal is generated by the maximal minors of a level matrix fixing the last row.
\end{Example}
This curve is not nearly free as noted in \cite[Remark 2.7(ii)]{Dimca-Sticlaru2018}.

On the other hand, its Jacobian ideal has the following minimal free resolution
	\begin{equation*}
	0\to R(-3-3-1)\stackrel{\psi}\longrightarrow
	R(-5)^2\oplus R(-6) \stackrel{\phi}\longrightarrow R(-3)^3\to R,
\end{equation*}
for suitable $\phi$, and
$$\psi=\left[\begin{matrix}
3/4z^2\\
y^2\\
-x-2y
\end{matrix}\right]
,$$
hence falls withing the format  \eqref{res-prelim-bis}, which by Theorem~\ref{mainthm} implies that it
is generated by the maximal minors fixing the last row of a suitable $(3,3, \underline{\delta},1)$-level matrix with $\underline{\delta}=\{2,2,3\}$.
Such a level matrix is, e.g.,
$$\eta=\left[\begin{matrix}
	0&x&0\\
	0&-3y&4\\[2pt]
	-x&-\frac{1}{3}z&0\\[2pt]
	& \quad\text{\rm transpose of} \; \psi &
\end{matrix}\right].$$ 

\begin{Example}\rm (Extended case of Dimca--Sticlaru) (char$(k)=0$) For an integer $d\geq 4$, let $f=xyzF\in R=k[x,y,z],$ where $F\in R_{d-2}$ defines a smooth hypersurface in $\mathbb{P}^2$. Assume that $\Ht \langle xF_x,yF_y,zF_z \rangle=3.$

	Note that
	$$\frac{\partial f}{\partial x}=yz(F+xF_x),\quad\frac{\partial f}{\partial y}=xz(F+yF_y),\quad\frac{\partial f}{\partial z}=xy(F+zF_z).$$
	
	Consider the following matrix $$\mathcal{N}=\left[\begin{matrix}
		x&0&0&\\
		0&y&0\\
		0&0&z\\
		F+xF_x&F+yF_y&F+zF_z
	\end{matrix}\right].$$
	
	Clearly, the partial derivatives above are the maximal minors $\mathcal{N}$ fixing the last row.
	We claim that $\mathcal{N}$ is a $(d,3 ,\underline{\delta},\underline{\epsilon})$-level matrix for the Jacobian ideal $J_f$, with $\delta_i=d-1$ for every $1\leq i\leq 3$ and  $\epsilon_1=d-2.$ Since, for these values, as one easily sees, the upper $3\times 3$ submatrix and the lowest submatrix satisfy the requisites of Definition~\ref{def_level},  it remains to show that the ideal $g:=\langle F+xF_x,F+yF_y,F+zF_z \rangle$ has height three. 
	This is clearly the case as, by the Euler relation, one has
	$$(d+1)F=3F+(d-2)F=3F+xF_x+yF_y+zF_z\in g,$$
	hence $\langle xF_x,yF_y,zF_z \rangle\subset g$ (actually an equality).
	
	Thus, according to Theorem \ref{mainthm}, the minimal graded free resolution of the Jacobian ideal $J_f$ is 
	$$0\to R(-3d+3)\to R(-2d+1)^3\to R(-d)^3\to R.$$
	
	In particular, we may take $F$ to be the Fermat form $F=x^{d-2}+y^{d-2}+z^{d-2}$ (such as in \cite[Proposition 4.2]{Dimca-SticlaruJacobianSameDeg}), or any general form of degree $d-2$ for that matter.
Actually, the assumption that $\Ht \langle xF_x,yF_y,zF_z \rangle=3$ is equivalent to requiring that pure powers of each among $x,y,z$ appear effectively in $F$. And yet, the main features of the above example are not a privilege of this assumption as there are examples of the form $f=xyzF$, with $V(F)$ smooth, for which the Jacobian ideal of $f$ illustrates Theorem~\ref{mainthm}, as  in the next piece. 
\end{Example} 

\begin{Example}\rm
	Let $f=xyz(x^d+xy^{d-1}+yz^{d-1})\in k[x,y,z]$ with $d\geq 3$. Here, $F:=x^d+xy^{d-1}+yz^{d-1}$ defines a smooth hypersurface in $\mathbb{P}^2$, but this time around $\Ht \langle xF_x,yF_y,zF_z \rangle=2.$ 
\end{Example}
One can show that the minimal free resolution of the Jacobian ideal of $f$ has the form
$$0\to R(-3d+4)\to R(-2d+2)^2\oplus R(-2d+1)\to R(-d-2)^3\to R.$$
The actual entries of the matrices $\phi$ and $\psi$ representing the differentials of the complex are involved, but an associated $(d+2, 3, \underline{\delta},1)$-level matrix has the following shape
	$$\eta=\left[\begin{matrix}
0&0&x&\\[3pt]
-xy&\frac{d(d-1)}{d-2}y^2&-\frac{d(d+1)-1}{d-2}y\\[3pt]
	xz&-2\frac{d-1}{d-2}yz&\frac{2d+1}{d-2}z\\[3pt]
	& \quad\text{\rm transpose of} \; \psi &
	\end{matrix}\right],$$
a verification left to the reader. 
Of course, the true intent is to first guess such an $\eta$ from which the shape of the free resolution above follows by Theorem~\ref{mainthm}.

\begin{Example}\rm (Higher cuspidal plane curves)
	For an integer $d\geq 2$ let $f=x^{d+1}+y^dz\in R=k[x,y,z].$ 
	The matrix
	$$\eta=\left[\begin{matrix}
		y^{d-1}&0&0\\
		0&1&0\\
		0&0&1\\
		(d+1)x^d&y&dz
	\end{matrix}\right]$$
can be seen to be $(d, \underline{\delta})$-level, with $\delta_1=1, \delta_2=\delta_3=d$.

Note that the partial derivatives of $f$ are (up to sign) the maximal minors of $\eta$ fixing the last row.	
Thus, by Theorem~\ref{mainthm} the minimal graded free resolution  
of the Jacobian ideal $J$ of $f$ is of the form
$$0\to R(-2d-1)\to R(-2d)^2\oplus R(-d-1)\to R(-d)^3\to R.$$
\end{Example}



\subsubsection{Plane curves with $4$-generated Jacobian syzygies}

In this part we point out a few examples in the case where the ideal of maximal minors fixing a submatrix is the Jacobian ideal of a form.

The first example is about arrangements of generic hypersurfaces, a theme largely explored in \cite{RTY}, of which one makes essential use here.

\begin{Example}\rm
	Let $\{f_1,\ldots,f_{m-2}\} \subset R=k[x,y,z]$ be a set of general forms of degrees $2\leq \deg f_1\leq \cdots\leq \deg f_{m-2}$, and set $f:=f_1\cdots f_{m-2}.$ Say, $m\geq 6$, for simplicity.
	
	Let $B$ be the concatenation  of the $(m-2)\times (m-3)$ matrix
	$$B'=\left[\begin{matrix}
		-f_1&0&\cdots&0\\
		0&-f_2&\cdots&0\\
		\vdots&\vdots&\ddots&\vdots\\
		0&0&\cdots&-f_{m-3}\\
		f_{m-2}&f_{m-2}&\cdots&f_{m-2}
	\end{matrix}\right].$$
	with the Jacobian matrix of $\{f_1,\ldots,f_{m-2}\}.$
\end{Example}
This matrix has been introduced in \cite{RTY} for homological purposes.

Being a mix of geometric and non-geometric situation, one appeals to  Proposition~\ref{mainII}. For this purpose, set:
$$d:=\deg f-1, \,\delta_1:=\cdots:=\delta_{m-3}:=d-1, \quad\delta_{m-2}:=\delta_{m-1}:=\delta_{m}=d,$$ $$\epsilon_i:=\deg f_i\, (1\leq i\leq m-5)\,\,\,\mbox{and}\,\,\epsilon_{i}:=\deg f_{i}-1\, (m-4\leq i\leq m-2)$$
With this one can see that the $i,j$th entry of $B$ has degree $\delta_{i+2}-\delta_j+\epsilon_i$ as in the statement of Proposition~\ref{mainII}.

Drawing upon	\cite[Proposition 3.1, and Proposition 3.4]{RTY} one has  $\Ht I_{m-2}(B)\geq 3$ and $\Ht I_{m-3}(B')\geq 2$. Let $J$ denote the Jacobian ideal of $f:=f_1\cdots f_{m-2}.$  As pointed in \cite[Theorem 3.5]{RTY}, $J$ is generated by the maximal minors of $B$ fixing the submatrix $B'.$ Thus,  by Proposition~\ref{mainII} the minimal graded free resolution of $J$ is:
\begin{equation*}
	0\lar \bigoplus_{j=1}^{m-2}R(-2d-\deg f_j+1) \lar	R(-2d)^3\oplus R(-2d+1)^{m-3} \lar R(-d)^3\lar R.
\end{equation*}

Next is an example that first appeared in \cite{JNS} as a geometric analogue of a $3$-generated ideal in $k[x,y,z]$ showed by D. Lazard to the senior author (personal communication) back in 1976.

\begin{Example}[{\cite[Example 2.6]{JNS}}]\rm 
	Let $f=(x^2-y^2)z^{d-1}-(x^{d-1}-y^{d-1})x^2-y^{d+1}\in R:=k[x,y,z], (d\geq 3)$, where $k$ is a field such that char$(k)$ does not divide $d-1$. 
	
	One can write
	$$		f_x=xp,\,\,
	f_y=yq\,\,\mbox{and}\,\,
	f_z=(d-1)(x^2-y^2)z^{d-2},
	$$
	where $$p=2y^{d-1}+2z^{d-1}-(d+1)x^{d-1}\,\, \mbox{and}\,\,  q=(d-1)x^2y^{d-3}-2z^{d-1}-(d+1)y^{d-1}.$$
	Introduce the matrices
	$$B=\left[\begin{matrix}
		0&q&-yz^{d-2}&x\\
		-p&0&-xz^{d-2}&y
	\end{matrix}\right]\quad \mbox{and}\quad B'=\left[\begin{matrix}x\\y\end{matrix}\right].$$
	
 Set:
	$$m=4, \delta_1=\delta_2=\delta_3=d, \delta_4=2d-2, \epsilon_1=d-1, \epsilon_2=1.$$
{\bf Claim.} With the above values,  $B$ and $B'$  satisfy the conditions of    Proposition~\ref{mainII}.

To see this,  first note that  the $i,j$th entry of $B$ has degree $\delta_{i+2}-\delta_j+\epsilon_i.$ 
	Since, obviously $\Ht I_1(B')=2$, it remains to prove that  $\Ht I_2(B)=3$.
	Clearly,
	${\rm ht\,}I_2(B)\leq 3.$ Now, let $P$ denote a prime ideal of $R$ containing  $I_2(B).$ In particular,  $P$ contains $J.$ 
	Thus, $\langle x,y\rangle\subset P.$ Since $pq\in P$ and $pq= 4z^{2d-2}+a,$ with $a\in \langle x,y\rangle,$ then $z\in P.$ Hence, $\langle x,y,z\rangle\subset P.$ With this, one concludes that ${\rm ht\,}P\geq 3.$ Therefore, ${\rm ht\,}P= 3,$ as claimed. 

Now, $f_x,f_y$ and $f_z/(d-1)$ are the $2$-minors of $B$ that fix $B'$. Therefore, by Proposition~\ref{mainII},  the minimal graded free resolution of $J=\langle f_x,f_y,f_z\rangle$ is
	$$0\to R(-3d+1)^{2}\to R(-2d)^{3}\oplus R(-3d+2)\to R(-d)^3\to R.$$	
	Now, here the syzygy matrix $K$ of $\coker B$ is
	$$K=\left[\begin{matrix}
		0&1/(d-1)f_z&-f_y&qxz^{d-2}\\
		-1/(d-1)f_z&0&f_x&-pyz^{d-2}\\
		f_y&-f_x&0&pq\\
		qxz^{d-2}&pyz^{d-2}&-pq&0
	\end{matrix}\right].$$
	Hence, by  Proposition~\ref{mainII}(b), the syzygy matrix of $\langle f_x,f_y,1/(d-1)f_z\rangle$ is
	$$\left[\begin{matrix}
		0&1/(d-1)f_z&-f_y&qxz^{d-2}\\
		-1/(d-1)f_z&0&f_x&-pyz^{d-2}\\
		f_y&-f_x&0&pq
	\end{matrix}\right].$$ 
	As a side note, the above minimal syzygy of standard degree $2d-2$ cannot have coordinates
	forming a regular sequence of length $3$ (\cite[Theorem 2.1]{Toh}).
	\end{Example}

	\begin{Example}\rm (Higher nodal) Let $f=(y^d-x^d)z+y^{d+1}\in R:=k[x,y,z], (d\geq 2)$, over a field $k$ of zero characteristic. Then $$f_x=x^{d-1}(-dz+(d+1)x),\quad f_y=dy^{d-1}z,\quad f_z=y^d-x^d.$$
		Introduce the matrices
		$$B=\left[\begin{matrix}
			x^{d-1}&0&dz&y\\
			y^{d-1}&dz-(d+1)x&0&x
		\end{matrix}\right]\quad \mbox{and}\quad B'=\left[\begin{matrix}x^{d-1}\\y^{d-1}\end{matrix}\right].$$
		Set: 
		$$m=4, \delta_1=2, \delta_2=\delta_3=\delta_4=d, \epsilon_1=1, \epsilon_2=d-1.$$   
	
	{\bf Claim.}  $B$ and $B'$  are as in the statement of Proposition~\ref{mainII}. 
	
	The argument is similar to the one in the previous example. Thus, first note that  the $i,j$th entry of $B$ has degree $\delta_{i+2}-\delta_j+\epsilon_i.$ Since, obviously $\Ht I_1(B')=2$, it remains to prove that  $\Ht I_2(B)=3$.
		Clearly,
		${\rm ht\,}I_2(B)\leq 3.$ Now, let $P$ denote a prime ideal of $R$ containing $I_2(B).$  Note that the Jacobian ideal $J=\langle f_x,f_y,f_z\rangle$ is the ideal of the two minors of $B$ that fix $B.$ With this, and the fact that the  $2$-minor of $B$ relative to the second and third rows of $B$ is $g=dxz$, it obtains
		$$(d+1)x^d=f_x+x^{d-2}g\in P.$$
		Thus, $x\in J.$  Consequently, this time around letting $h$ denote  the  $2$-minor of $B$ relative to the first and second rows of $B$, it obtains 	$$y^d=f_z+x^d\in P\quad \mbox{and}\quad d^2z^2=(d+1)dzx-h\in P.$$ Hence, $\langle x,y,z\rangle \subset P.$ Therefore, $\Ht P=3.$ In particular, $\Ht I_{2}(B)=3$ as was claimed.
	
	Finally, by Proposition~\ref{mainII}  the minimal graded free resolution of $J=\langle f_x,f_y,f_z\rangle$ is
		$$0\to R(-3d+1)^{2}\to R(-2d)^{3}\oplus R(-d-2)\to R(-d)^3\to R.$$
		One may observe that this resolution generalizes the one in \cite[Theorem 2.3]{SimisOrdinary}.
\end{Example}

		\bibliographystyle{amsalpha}

\begin{thebibliography}{10}
			
			\bibitem{Miro-et-al} A. Andrade, V. Beorchia and R. M. Miró-Roig, A characterization of quasi-homogeneous singularities of free and nearly free plane curves,  Int. Math. Res. Notices, {\bf 1} 2025(1), 1--21.
			
			\bibitem{AnSi1981} J. Andrade and A. Simis, A complex that resolves the ideal of minors having $n-1$ columns in common,            Proc. Amer. Math. Soc. {\bf 81} (1981), 217--219.
			
			\bibitem{AnSi1986}{J. Andrade and A. Simis, Ideals of minors fixing a submatrix, J. Algebra {\bf 102} (1986),  	249--259.}
			
			\bibitem{Ellia1998} V. Beorchia and Ph. Ellia, On the equations defining quasi complete intersection space curves, Arch. Math. {\bf 70} (1998), 244--249.
		
			
			\bibitem{Bruns} W. Bruns, “Jede” endliche freie Auflösung ist freie Auflösung eines von drei Elementen erzeugten Ideals, J. Algebra {\bf 39}  (1976), 429--439. 
			
			
				\bibitem{BHbook}{W. Bruns, J. Herzog, Cohen–Macaulay Rings, Cambridge University Press, Cambridge, 1993.}
				
				\bibitem{Burch} L. Burch, A note on the homology of ideals generated by three elements in local
				rings, Proc. Cambridge Philos. Soc. {\bf 64} (1968), 949--952.
			
			\bibitem{RTY} R. Burity, Z. Ramos, A. Simis, and S. Toh\v{a}neanu, Rose-Terao-Yuzvinsky theorem for reduced
			forms, J. Algebra {\bf 673} (2025), 45–76.
			
		
		
		
		
			
	\bibitem{Dimca-Sticlaru2018} A. Dimca and G. Sticlaru, Nearly free divisors vs. rational cuspidal curves, Publ. RIMS Kyoto Univ.  {\bf 54} (2018), 163--179.

\bibitem{Dimca-Sticlaru2019} A. Dimca and G. Sticlaru, Saturation of Jacobian ideals: Some applications to nearly free curves, line arrangements and rational cuspidal plane curves, J. Pure Applied Algebra
{\bf 223} (2019), 5055--5066.

		
		
			\bibitem{Dimca_Sticlaru-Geom.Ded} A. Dimca and G. Sticlaru, Plane curves with three syzygies, minimal Tjurina curves curves,
		and nearly cuspidal curves. Geom. Dedicata {\bf 207} (2020) 29--49.
		
		
		\bibitem{Dimca-SticlaruJacobianSameDeg} A. Dimca and G. Sticlaru, Curves with Jacobian syzygies of the same degree, Rendiconti del Circolo Matematico di Palermo Series 2 (2025) 74:84.
		
	\bibitem{Dimca-SticlaruJacobianDeriv} A. Dimca and G. Sticlaru, Bourbaki modules and the module of Jacobian derivations of projective hypersurfaces, arXiv:2506.23950v3 [math.AG] 14 Jul 2025.
		
			
		
		\bibitem{E}{D. Eisenbud, Commutative algebra with a view toward algebraic geometry,  vol. 150, Graduate Texts in Mathematics, Springer-Verlag, New York, 1995.}
		
		\bibitem{PPPideals} G. Gama, D. Lira, Z. Ramos and A. Simis, Cohen–Macaulay ideals of codimension two and the geometry of plane points, Bull Braz Math Soc, New Series, Vol. 56, {\bf 60} (2025). https://doi.org/10.1007/s00574-025-00488-x
		
	\bibitem{Gulliksen}	T. Gulliksen, Tout idéal premier d'un anneau noethérien est associé à un idéal engendré par trois éléments, C. R. Acad. Sei. Paris Sér. A-B {\bf 271} (1970), A1206--A1207.
		
		\bibitem{HS}{S. H. Hassanzadeh, A. Simis, Plane Cremona maps: Saturation and regularity of the base ideal, J. Algebra {\bf 371} (2012), 620–652}
		
				
		
		\bibitem{JNS}{M. Jardim, A. N. Nejad and A. Simis,  The Bourbaki Degree of plane projective curves, Trans. Amer. Math. Soc. (2024),  7633-7655.}
			
	\bibitem{Kohn}	P. Kohn, Ideals generated by three elements, Proc. Amer. Math. Soc. {\bf 35} (1972), 55--58.
			
			\bibitem{Lazard} D. Lazard,  Alg\`ebre lin\'eaire sur $K[x_1, \ldots, x_n]$ et \'elimination, Bulletin de la S. M. F., Tome 105 (1977), 165--190.
			
			\bibitem{Ma_et_al2021} G. Malara, P. Pokora and H. Tutaj-Gasinska, On $3$-syzygy and unexpected plane curves, Geom. Dedicata, {\bf 214} (2021), 49--63.
			
			
			\bibitem{McCu2011} J.  McCullough, A family of ideals with few generators in low degree and large projective dimension,
			Proc. Amer. Math. Soc. {\bf 139} (2011), 2017--2023.
			
			\bibitem{McCu} J.  McCullough, Prime Ideals and Three-generated Ideals with Large Regularity, Comptes Rendus. Mathématique, Volume {\bf 362} (2024),  251--255.
		
			
				
				
					
					
					\bibitem{SimisOrdinary} A. Simis, A note on the Koszul homology of ordinary singularities, Bol. Soc. Bras. Mat. {\bf 8.2} (1977), 149--159.
					
				
				\bibitem{SimisTohaneanu} A. Simis and S. O. Toh\v aneanu,	Homology of homogeneous divisors. Isr. J. Math. {\bf 200} , (2014) 449--487.
				
			
	\bibitem{Toh} S. O. Toh\u aneanu, On Freeness of Divisors on $\pp^2$, Comm. in Algebra, {\bf 41} (2013), 2916--2932. 
				
		\end{thebibliography}

\pagebreak		
		
	\noindent	
	\begin{minipage}{7cm}
		\begin{center}	
			{\sc Ricardo Burity} \\          
			Departamento de Matem\'atica, CCEN\\
			Universidade Federal da Para\'iba \\
			58051-900 J, Pessoa, PB, Brazil\\
			ricardo@mat.ufpb.br \hspace{5cm}
		\end{center}
	\end{minipage} \quad 
	\begin{minipage}{7cm}
		\begin{center}	
			\noindent {\sc Thiago Fiel}\\
			Departamento de Matem\'atica, CCEN\\ 
			Universidade Federal de Pernambuco\\ 
			50740-560 Recife, PE, Brazil\\
			thiagofieldacostacabral@gmail.com
		\end{center}
	\end{minipage} 
	
	\bigskip
	
	\noindent	
	\begin{minipage}{7cm}
		\begin{center}	
			\noindent {\sc Zaqueu Ramos}\\
			Departamento de Matem\'atica, CCET\\ 
			Universidade Federal de Sergipe\\
			49100-000 S\~ao Cristov\~ao, SE, Brazil\\
			zaqueu@mat.ufs.br
		\end{center}
	\end{minipage}
	\quad 
	\begin{minipage}{7cm}
		\begin{center}
			\noindent {\sc Aron Simis}\\
			Departamento de Matem\'atica, CCEN\\ 
			Universidade Federal de Pernambuco\\ 
			50740-560 Recife, PE, Brazil\\
			aron.simis@ufpe.br
		\end{center}
	\end{minipage}
		
		\end{document}